\numberwithin{equation}{section}
\numberwithin{figure}{section}
  \theoremstyle{plain}
  \newtheorem*{thm*}{\protect\theoremname}
\theoremstyle{plain}
\newtheorem{thm}{\protect\theoremname}
\theoremstyle{plain}
\newtheorem{thmInt}{\protect\theoremname}
  \theoremstyle{plain}
  \newtheorem{cor}{\protect\corollaryname}
  \theoremstyle{definition}
  \newtheorem{defn}{\protect\definitionname}
  \theoremstyle{plain}
  \newtheorem{lem}{\protect\lemmaname}
  \theoremstyle{plain}
  \theoremstyle{plain}
  \newtheorem*{lem*}{\protect\lemmaname}
\DeclareMathOperator{\supp}{supp}
  \providecommand{\definitionname}{Definition}
  \providecommand{\lemmaname}{Lemma}
  \providecommand{\propositionname}{Proposition}
  \providecommand{\theoremname}{Theorem}
\providecommand{\corollaryname}{Corollary}
\providecommand{\theoremname}{Theorem}
\def\XXint#1#2#3{{\setbox0=\hbox{$#1{#2#3}{\int}$}
    \vcenter{\hbox{$#2#3$}}\kern-.5\wd0}}
\begin{document}

\title{A quantitative approach to weighted Carleson Condition}

\author{I.P. Rivera-R\'{\i}os}
\address{Israel P. Rivera-R\'{\i}os, IMUS \& Departamento de Análisis Matemático, Universidad de Sevilla, Sevilla, Spain}

\begin{abstract}Quantitative versions of weighted estimates obtained by F. Ruiz and J.L. Torrea \cite{R,RT} for the operator
\[
\mathcal{M}f(x,t)=\sup_{x\in Q,\,l(Q)\geq t}\frac{1}{|Q|}\int_{Q}|f(x)|dx \qquad x\in\mathbb{R}^{n}, \, t \geq0
\]
are obtained.  As a consequence, some sufficient conditions for the boundedness
of $\mathcal{M}$ in the two weight setting in the spirit of the results
obtained by C. P\'erez and E. Rela \cite{PR} and very recently by M.T.
Lacey and S. Spencer \cite{LS} for the Hardy-Littlewood maximal operator are derived. As a byproduct some new quantitative
estimates for the Poisson integral are obtained.
\end{abstract}

\maketitle

\section{Introduction}

In 1962 L. Carleson \cite{C} studied a variation of the Hardy-Littlewood
maximal operator defined on $\mathbb{R}_{+}^{n+1}=\{(x,t)\,:\,x\in\mathbb{R}^{n},t\geq0\}$
as follows
\[
\mathcal{M}f(x,t)=\sup_{x\in Q,\,l(Q)\geq t}\frac{1}{|Q|}\int_{Q}|f(x)|dx.
\]
The interest of the study of this operator stems from the fact that
it controls pointwise the Poisson integral. Indeed, if we call
\[
P(x,t)=c_{n}\frac{t}{\left(|x|^{2}+t^{2}\right)^{\frac{n+1}{2}}}
\]
the Poisson integral is defined as
\[
Pf(x,t)=\int_{\mathbb{R}^{n}}f(y)P(x-y,t)dy\quad x\in\mathbb{R}^{n},\,t\geq0
\]
and it's not hard to check that (cf. \cite{GCRdF} Chapter 2) 
\begin{equation}Pf(x,t)\leq c_{n}\mathcal{M}f(x,t).
\label{PoissonPuntual}\end{equation}
Carleson characterized the positive Borel
measures on $\mathbb{R}_{+}^{n+1}$ such that $\mathcal{M}$ is of
strong type $(p,p)$ for $p>1$ and of weak type $(1,1)$. The precise statement
of his result is the following
\begin{thmInt}
Let $\mu$ be a positive Borel measure  on $\mathbb{R}_{+}^{n+1}$ and let $1<p<\infty$. Then, the
following statements are equivalent,
\begin{enumerate}
\item $\mathcal{M}:L^{p}(\mathbb{R}^{n})\rightarrow L^{p}(\mathbb{R}_{+}^{n+1},\mu)$\,  or
\,\textup{$\mathcal{M}:L^{1}(\mathbb{R}^{n})\rightarrow L^{1,\infty}(\mathbb{R}_{+}^{n+1},\mu)$}
\item There is a constant $C$ such that for each cube $Q$ in $\mathbb{R}^{n}$
\[
\mu(\tilde{Q})\leq C|Q|
\]
where $\tilde{Q}=\left\{ (x,t)\,:\,x\in Q,\,0\leq t\leq l(Q)\right\} .$
\end{enumerate}
\end{thmInt}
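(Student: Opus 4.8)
The plan is to prove the two implications separately: $(1)\Rightarrow(2)$ by a single testing argument, and $(2)\Rightarrow(1)$ by a Whitney covering that reduces the estimate to the classical Hardy--Littlewood maximal theorem. At the outset I would record that $\mathcal Mf$ is Borel measurable on $\mathbb{R}_{+}^{n+1}$: since $\avgint_Q|f|$ depends continuously on the cube $Q$, the supremum defining $\mathcal Mf$ may be restricted to cubes with rational center and side length, exhibiting $\mathcal Mf$ as a countable supremum of Borel functions of $(x,t)$. For $(1)\Rightarrow(2)$ I would fix a cube $Q$ and test with $f=\chi_Q$: if $(x,t)\in\tilde Q$, i.e.\ $x\in Q$ and $0\le t\le l(Q)$, then $Q$ itself is admissible in the supremum defining $\mathcal M$, so $\mathcal M\chi_Q(x,t)\ge\tfrac1{|Q|}\int_Q\chi_Q=1$ and hence $\tilde Q\subseteq\{\mathcal M\chi_Q>\tfrac12\}$. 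The strong $(p,p)$ hypothesis then gives $\mu(\tilde Q)\le\int(\mathcal M\chi_Q)^p\,d\mu\le\|\mathcal M\|^p|Q|$, and the weak $(1,1)$ hypothesis gives $\mu(\tilde Q)\le\mu(\{\mathcal M\chi_Q>\tfrac12\})\le 2\|\mathcal M\|\,|Q|$; either way $(2)$ holds.

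For $(2)\Rightarrow(1)$ I would first reduce to $f\ge0$ with finite norm, and write $M$ for the uncentered Hardy--Littlewood maximal operator over cubes, so that the open set $\Omega_\lambda:=\{Mf>\lambda\}$ has finite measure for each $\lambda>0$ (weak type $(1,1)$ of $M$ after truncating $f$). From the definition of $\mathcal M$ one has $\{\mathcal Mf>\lambda\}=\bigcup\{\tilde Q:\avgint_Qf>\lambda\}$, and since $\avgint_Qf>\lambda$ forces $Q\subseteq\Omega_\lambda$, this yields
\[
\{\mathcal Mf>\lambda\}\ \subseteq\ \widehat{\Omega_\lambda}:=\bigcup\{\tilde Q:\ Q\ \text{a cube},\ Q\subseteq\Omega_\lambda\}.
\]
So it would suffice to prove the geometric estimate $\mu(\widehat\Omega)\le c_nC\,|\Omega|$ for every open $\Omega\subseteq\mathbb{R}^n$ of finite measure, $C$ being the Carleson constant in $(2)$.

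To prove that estimate I would take a Whitney decomposition $\Omega=\bigcup_jQ_j$ into dyadic cubes with pairwise disjoint interiors and $\dist(Q_j,\Omega^c)\simeq l(Q_j)$. For any cube $Q\subseteq\Omega$ its center lies in some $Q_{j_0}$; comparing $\dist(\,\cdot\,,\Omega^c)$ at that center --- which is $\ge l(Q)/2$ because $Q\subseteq\Omega$, and $\lesssim_{n} l(Q_{j_0})$ by the Whitney property --- gives $l(Q_{j_0})\gtrsim_{n} l(Q)$. Since $Q$ and $Q_{j_0}$ overlap and are of comparable size, a fixed dimensional dilate $K_nQ_{j_0}$ satisfies both $Q\subseteq K_nQ_{j_0}$ and $l(Q)\le K_nl(Q_{j_0})$, whence $\tilde Q\subseteq\widetilde{K_nQ_{j_0}}$. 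Therefore $\widehat\Omega\subseteq\bigcup_j\widetilde{K_nQ_j}$, and applying $(2)$ to each $K_nQ_j$ and using disjointness of the $Q_j$,
\[
\mu(\widehat\Omega)\le\sum_j\mu(\widetilde{K_nQ_j})\le C\sum_j|K_nQ_j|=CK_n^n\sum_j|Q_j|=CK_n^n\,|\Omega|.
\]
Combining with the inclusion above, $\mu(\{\mathcal Mf>\lambda\})\le CK_n^n\,|\{Mf>\lambda\}|$ for all $\lambda>0$; multiplying by $p\lambda^{p-1}$, integrating, and invoking the strong $(p,p)$ bound for $M$ (here $p>1$) gives $\|\mathcal Mf\|_{L^p(\mu)}^p\lesssim_{n,p}C\,\|f\|_{L^p}^p$, while the weak $(1,1)$ bound for $M$ gives $\mu(\{\mathcal Mf>\lambda\})\lesssim_{n} C\lambda^{-1}\|f\|_{L^1}$, which is $(1)$.

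The step I expect to be the main obstacle is the geometric covering: next to an arbitrary admissible cube $Q\subseteq\Omega$ one must locate a Whitney cube of side length \emph{comparable} to $l(Q)$ --- not merely $\le l(Q)$ --- so that a single dimensional dilate of it engulfs the entire tent $\tilde Q$; this is exactly what makes the Carleson testing condition on the dilated Whitney cubes sum to a multiple of $|\Omega|$. Everything else is the classical maximal theorem and routine tracking of constants.
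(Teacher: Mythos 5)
Your proof is correct, but note that the paper never proves this statement: it is Theorem I of the introduction, quoted as background from Carleson \cite{C} and \cite{GCRdF}, so there is no in-paper argument to compare against line by line. What you wrote is essentially the classical proof of the reference: the testing direction $\tilde Q\subseteq\{\mathcal M\chi_Q\geq 1\}$, and for the converse the distributional comparison $\mu(\{\mathcal Mf>\lambda\})\leq c_nC\,|\{Mf>\lambda\}|$ obtained from a Whitney decomposition of $\Omega_\lambda=\{Mf>\lambda\}$, after which both the weak $(1,1)$ and the strong $(p,p)$ bounds are inherited from the Hardy--Littlewood maximal theorem. The geometric step you single out (finding a Whitney cube of side length \emph{comparable} to $l(Q)$ next to any $Q\subseteq\Omega_\lambda$, so that a fixed dilate swallows the whole tent $\tilde Q$) is argued correctly via the two-sided comparison of $\dist(c_Q,\Omega_\lambda^c)$ with $l(Q)$ and with $l(Q_{j_0})$. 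It is worth contrasting this with the machinery the paper actually develops: in the proof of its Theorem 1 the weak-type bound is obtained not with Whitney cubes but with maximal dyadic Calder\'on--Zygmund cubes $Q_j$ at height $\alpha/4^n$ and the tripled tents $\widetilde{3Q_j}$, which with $\sigma\equiv1$ recovers your $(2)\Rightarrow$ weak type (and quantitatively gives $\|\mathcal M\|\lesssim_{n,p}C^{1/p}$, matching your constant tracking), while the strong type in the paper's framework comes from the Sawyer-type testing condition of Theorem 2 rather than from a distribution-function comparison with $M$; your route to the strong bound is the more elementary one and avoids any testing/sparse argument. Two cosmetic remarks: your measurability reduction to rational cubes is fine, and the auxiliary set $\widehat{\Omega_\lambda}$ need not itself be Borel, but this is harmless since you may pass directly from the Borel set $\{\mathcal Mf>\lambda\}$ to the countable union $\bigcup_j\widetilde{K_nQ_j}$.
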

In fact, it can be established that exactly the same result holds
for the Poisson integral (cf. \cite{GCRdF} Chapter 2). Condition (2) in the preceding  Theorem
is the so called ``Carleson Condition''.  Later on, in the 70's \cite{FS}
Fefferman and Stein obtained the following weighted result

\begin{thmInt}Let $\mu$ be a positive Borel measure  on $\mathbb{R}_{+}^{n+1}$ and let $w$ be a weight in $\mathbb{R}^{n}$ such that
\[
M\mu(x)=\sup_{x\in Q} \frac{\mu(\tilde{Q})}{|Q|}\leq Cw(x).
\]
Then we have both  \, $\mathcal{M}:L^{p}(\mathbb{R}^{n},w)\rightarrow L^{p}(\mathbb{R}^{n+1},\mu)$,
\textup{and}  $\mathcal{M}:L^{1}(\mathbb{R}^{n},w)\rightarrow L^{1,\infty}(\mathbb{R}_{+}^{n+1},\mu)$
\end{thmInt}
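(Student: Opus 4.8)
The plan is to prove the weak-type bound $\mu(\{(x,t):\mathcal{M}f(x,t)>\lambda\})\le C\lambda^{-1}\int_{\mathbb{R}^{n}}|f|w\,dx$ together with its $L^{q}$ analogue for every $q\ge1$, and then to deduce the strong $(p,p)$ estimate for $1<p<\infty$ by Marcinkiewicz interpolation between the weak $(1,1)$ bound and a weak $(q,q)$ bound with $q>p$. By a routine density argument it suffices to argue for $f$ bounded with compact support, so that all averages below are finite and the cubes that occur have uniformly bounded side length.

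Fix $\lambda>0$ and set $\mathcal{Q}_{\lambda}=\{Q\subseteq\mathbb{R}^{n}\text{ cube}:\frac{1}{|Q|}\int_{Q}|f|>\lambda\}$. Unwinding the definition of $\mathcal{M}$, a point $(x,t)$ satisfies $\mathcal{M}f(x,t)>\lambda$ exactly when $x\in Q$ and $t\le l(Q)$ for some $Q\in\mathcal{Q}_{\lambda}$, so that $E_{\lambda}:=\{\mathcal{M}f>\lambda\}=\bigcup_{Q\in\mathcal{Q}_{\lambda}}\tilde{Q}$; moreover each such $Q$ has $|Q|<\lambda^{-1}\|f\|_{L^{1}}$. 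Applying the basic ($5r$-)covering lemma for cubes I extract a countable pairwise disjoint subfamily $\{Q_{j}\}\subseteq\mathcal{Q}_{\lambda}$ so that every $Q\in\mathcal{Q}_{\lambda}$ meets some $Q_{j}$ with $l(Q)\le2l(Q_{j})$; then $Q\subseteq5Q_{j}$ and, since $l(Q)\le5l(Q_{j})=l(5Q_{j})$, also $\tilde{Q}\subseteq\widetilde{5Q_{j}}$. Hence $E_{\lambda}\subseteq\bigcup_{j}\widetilde{5Q_{j}}$ and $\mu(E_{\lambda})\le\sum_{j}\mu(\widetilde{5Q_{j}})$.

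The core is the estimate of a single term. By the very definition of $M\mu$ we have $\mu(\widetilde{5Q_{j}})/|5Q_{j}|\le M\mu(x)$ for every $x\in5Q_{j}\supseteq Q_{j}$, so $\mu(\widetilde{5Q_{j}})\le5^{n}|Q_{j}|\inf_{Q_{j}}M\mu$. Now I combine this with $|Q_{j}|<\lambda^{-1}\int_{Q_{j}}|f|$, with the trivial minorization $\inf_{Q_{j}}M\mu\le M\mu(y)$ for $y\in Q_{j}$, and finally with the hypothesis $M\mu\le Cw$:
\[
\mu(\widetilde{5Q_{j}})\le\frac{5^{n}}{\lambda}\,\Big(\inf_{Q_{j}}M\mu\Big)\int_{Q_{j}}|f(y)|\,dy\le\frac{5^{n}}{\lambda}\int_{Q_{j}}|f(y)|M\mu(y)\,dy\le\frac{5^{n}C}{\lambda}\int_{Q_{j}}|f(y)|w(y)\,dy.
\]
Summing over the disjoint family $\{Q_{j}\}$ gives the weak $(1,1)$ estimate. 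The weak $(q,q)$ estimate is obtained identically: replace $|Q_{j}|<\lambda^{-1}\int_{Q_{j}}|f|$ by $|Q_{j}|<\lambda^{-q}\int_{Q_{j}}|f|^{q}$ (Jensen) and carry $|f|^{q}$ through, yielding $\mu(E_{\lambda})\le5^{n}C\lambda^{-q}\|f\|_{L^{q}(w)}^{q}$. Marcinkiewicz interpolation between $q=1$ and any fixed $q>p$ then gives $\mathcal{M}:L^{p}(\mathbb{R}^{n},w)\to L^{p}(\mathbb{R}_{+}^{n+1},\mu)$.

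The step that is the real point — rather than a genuine obstacle — is the displayed chain of inequalities: one must resist replacing $|Q_{j}|\inf_{Q_{j}}M\mu$ by $\int_{Q_{j}}M\mu\le Cw(Q_{j})$, since summing $w(Q_{j})$ would only give $\mu(E_{\lambda})\lesssim w(\{Mf>\lambda\})$, and $M$ is not of weak type $(1,1)$ against a general weight $w$. Keeping $\inf_{Q_{j}}M\mu$ as a pointwise minorant of $Cw$ on $Q_{j}$ and pairing it with $\int_{Q_{j}}|f|$ is exactly what makes the right-hand side come out as $\int_{Q_{j}}|f|w$. The remaining ingredients — the identity for $E_{\lambda}$, the containment $\tilde{Q}\subseteq\widetilde{5Q_{j}}$ (for which one needs the size control $l(Q)\lesssim l(Q_{j})$ supplied by the covering lemma), the reduction to nice $f$, and the $\sigma$-finiteness of $w\,dx$ and $\mu$ needed for Marcinkiewicz — are all routine.
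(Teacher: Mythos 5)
Your proof is correct. A remark first: the paper never proves this statement itself --- it is quoted in the introduction as the classical Fefferman--Stein result, with a citation to \cite{FS} --- so the only thing to compare against is the closely related covering argument the paper does carry out for its Theorem \ref{Th:Debilpp}. There the author covers the level set $E_\alpha$ by boxes $\widetilde{3Q_j}$ built over \emph{maximal dyadic} cubes, whereas you run a Vitali/$5r$ selection directly on the family of all cubes with average above $\lambda$ and cover $E_\lambda$ by $\widetilde{5Q_j}$; both devices are standard and interchangeable here, and your observation that $l(Q)\leq 2l(Q_j)$ is what gives the inclusion of the boxes (not just of the cubes) is exactly the point that also drives the paper's $\widetilde{3Q_j}$ step. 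The genuinely Fefferman--Stein ingredient, which the paper's Theorem \ref{Th:Debilpp} argument does not need but your statement does, is the pairing
\[
\mu(\widetilde{5Q_j})\leq 5^n|Q_j|\inf_{Q_j}M\mu\leq \frac{5^n}{\lambda}\int_{Q_j}|f|\,M\mu\leq \frac{5^nC}{\lambda}\int_{Q_j}|f|\,w ,
\]
and you have it right, including the warning against integrating $M\mu$ over $Q_j$ separately from $f$. Your passage to the strong type via weak $(q,q)$ for all $q$ (Jensen inside the same covering) plus Marcinkiewicz is also fine and has a small advantage over the more common shortcut of interpolating with $L^\infty$: it avoids having to discuss whether $\|\cdot\|_{L^\infty(w)}$ controls the sup-norm when $w$ degenerates (under the hypothesis $M\mu\leq Cw$, $w=0$ on a set of positive measure forces $\mu\equiv 0$, but your route never needs this observation). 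The only cosmetic issues are the $t\leq l(Q)$ versus $t<l(Q)$ convention for $\tilde Q$, which the paper itself uses inconsistently and which costs nothing, and the density/monotone-convergence reduction to bounded compactly supported $f$, which you correctly flag as routine.
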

This result was extended in the 80's by F. Ruiz and J. L. Torrea \cite{R,RT}.
The conditions obtained by these authors are in the spirit of the ones
obtained by B. Muckenhoupt's \cite{M} and E. Sawyer's \cite{S} for
the Hardy-Littlewood maximal function. The main results are the following.
\begin{thmInt}\label{Thm:R}
Let $\mu$ be a positive Borel measure  on $\mathbb{R}_{+}^{n+1}$ and let $w$ be a weight in $\mathbb{R}^{n}$. Let also $1< p<\infty$,  then the following statements are equivalent: 
\begin{enumerate}
\item  $\mathcal{M}:L^{p}(\mathbb{R}^{n},w)\longrightarrow L^{p,\infty}(\mathbb{R}_{+}^{n+1},\mu)$
\item  $\sup_{Q}\frac{\mu(\tilde{Q})}{|Q|}\left(\frac{1}{|Q|}\int_{Q}w(x)^{1-p'} \,dx\right)^{p-1}<\infty$.
\end{enumerate}
\end{thmInt}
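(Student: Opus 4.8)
The plan is to establish both implications while tracking constants, so that the stated equivalence is in fact quantitative: writing $[w,\mu]$ for the supremum in $(2)$ and setting $\sigma:=w^{1-p'}$, I would show $[w,\mu]\le\|\mathcal M\|_{L^{p}(w)\to L^{p,\infty}(\mu)}^{p}$ and conversely $\|\mathcal M\|_{L^{p}(w)\to L^{p,\infty}(\mu)}^{p}\le 5^{np}\,[w,\mu]$. The only arithmetic facts needed are $(p-1)(p'-1)=1$, $p'(p-1)=p$ and $(1-p')p+1=1-p'$. For the necessity of $(2)$ I would fix a cube $Q$ and test the weak-type inequality on $f=\sigma\chi_{Q}$ (first with $\sigma$ replaced by $\sigma_{N}=\min(\sigma,N)$, letting $N\to\infty$ afterwards to dispose of integrability issues). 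The point is geometric: if $(x,t)\in\tilde Q$ then $x\in Q$ and $l(Q)\ge t$, so $Q$ itself is an admissible cube in the supremum defining $\mathcal M f(x,t)$, whence $\mathcal M(\sigma\chi_{Q})(x,t)\ge\frac1{|Q|}\int_{Q}\sigma$ on all of $\tilde Q$. Applying the weak-type bound at the level $\lambda=\frac1{|Q|}\int_{Q}\sigma$ and using $\|\sigma\chi_{Q}\|_{L^{p}(w)}^{p}=\int_{Q}\sigma$ gives $\mu(\tilde Q)\le\|\mathcal M\|^{p}|Q|^{p}\big(\int_{Q}\sigma\big)^{1-p}$, i.e.\ exactly $(2)$.

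For the sufficiency, the strategy is to reduce the $(n+1)$-dimensional level-set estimate to an $n$-dimensional covering argument. By monotone convergence ($\mathcal M f_{k}\uparrow\mathcal M f$ when $0\le f_{k}\uparrow f$) we may assume $f\ge0$ is bounded with compact support, so that every cube $Q\subset\mathbb R^{n}$ with $\frac1{|Q|}\int_{Q}f>\lambda$ has side length bounded in terms of $f$ and $\lambda$. Fix $\lambda>0$ and let $E_{\lambda}=\{(x,t):\mathcal M f(x,t)>\lambda\}$. For each $(x,t)\in E_{\lambda}$ choose a cube $Q_{x,t}\ni x$ with $l(Q_{x,t})\ge t$ and $\frac1{|Q_{x,t}|}\int_{Q_{x,t}}f>\lambda$; then $(x,t)\in\widetilde{Q_{x,t}}$. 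Applying the basic Vitali covering lemma to $\{Q_{x,t}\}$ produces a countable pairwise disjoint subfamily $\{R_{j}\}$, each still with $\frac1{|R_{j}|}\int_{R_{j}}f>\lambda$, such that every $Q_{x,t}$ is contained in $5R_{j}$ for some $j$; since $Q\subseteq Q'$ forces $\tilde Q\subseteq\widetilde{Q'}$, we conclude $E_{\lambda}\subseteq\bigcup_{j}\widetilde{5R_{j}}$.

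It then remains to estimate each $\mu(\widetilde{5R_{j}})$ by combining three ingredients: hypothesis $(2)$ for the cube $5R_{j}$; the monotonicity $\int_{5R_{j}}\sigma\ge\int_{R_{j}}\sigma$, which (as $1-p<0$) gives $\big(\int_{5R_{j}}\sigma\big)^{1-p}\le\big(\int_{R_{j}}\sigma\big)^{1-p}$; and Hölder's inequality on $R_{j}$, namely $\lambda|R_{j}|<\int_{R_{j}}f\le\big(\int_{R_{j}}f^{p}w\big)^{1/p}\big(\int_{R_{j}}\sigma\big)^{1/p'}$, which rearranges --- using $p'(p-1)=p$ and $(p-1)(p'-1)=1$ --- to $\big(\int_{R_{j}}\sigma\big)^{1-p}\le\lambda^{-p}|R_{j}|^{-p}\int_{R_{j}}f^{p}w$. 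Chaining these yields $\mu(\widetilde{5R_{j}})\le 5^{np}[w,\mu]\,\lambda^{-p}\int_{R_{j}}f^{p}w$, and summing over the pairwise disjoint $R_{j}$ gives $\lambda^{p}\mu(E_{\lambda})\le 5^{np}[w,\mu]\,\|f\|_{L^{p}(w)}^{p}$, which is the asserted bound $\mathcal M\colon L^{p}(w)\to L^{p,\infty}(\mu)$ (with $5^{np}$ improvable to $3^{np}$ via a sharper covering).

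The main obstacle is the opening, conceptual step of the sufficiency direction: seeing that a point of $E_{\lambda}$ in the upper half-space automatically lies in the Carleson box $\tilde Q$ over the $n$-dimensional cube $Q$ that realizes the maximal average, which collapses an $(n+1)$-dimensional estimate into a Vitali covering in $\mathbb R^{n}$. After that, everything is routine two-weight bookkeeping: the exponent identities, the truncation of $\sigma$ in the necessity part, and the reduction to a bounded compactly supported $f$ needed to keep the competing cubes of uniformly bounded size. I would also note that the same scheme gives the strong-type version and, via the pointwise domination $Pf\le c_{n}\mathcal M f$ from \eqref{PoissonPuntual}, transfers to the Poisson integral.
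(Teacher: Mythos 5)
Your proposal is correct and is essentially the paper's own argument: the statement is proved there in its quantitative form (Theorem \ref{Th:Debilpp}, with $\sigma=w^{1-p'}$), where necessity comes from testing on $\sigma\chi_{Q}$ and the inclusion $\tilde Q\subseteq\{\mathcal{M}(\sigma\chi_Q)>\sigma(Q)/|Q|\}$, and sufficiency from covering the level set $E_\alpha$ by Carleson boxes over dilates of disjoint cubes with large average and then applying condition (2) together with H\"older's inequality, exactly as you do. The only difference is the covering device: you invoke the $5r$ Vitali lemma (legitimate, since your reduction to bounded compactly supported $f$ gives uniformly bounded sidelengths), whereas the paper selects maximal dyadic cubes at level $\alpha/4^{n}$ and uses $3$-fold dilates, which is why its constant is $3^{np}$ rather than your $5^{np}$.
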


\begin{thmInt}\label{Thm:RT}Let $\mu$ be a positive Borel measure  on $\mathbb{R}_{+}^{n+1}$ and let $v$ be a weight in $\mathbb{R}^{n}$. Let also $1<p<\infty$, then  the following
conditions are equivalent:
\begin{enumerate}
\item  $\mathcal{M}:L^{p}(\mathbb{R}^{n},{v})\longrightarrow L^{p}(\mathbb{R}_{+}^{n+1},\mu)$
\item There is a constant $c>0$ such that for each cube $Q$
\[
\int_{\tilde{Q}}\mathcal{M}(v^{1-p'}\chi_{Q})(x,t)^{p}d\mu(x,t)\leq c\int_{Q}v^{1-p'}(x)dx<\infty
\]
\end{enumerate}
\end{thmInt}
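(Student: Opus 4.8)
The plan is to prove the equivalence following Sawyer's testing-condition scheme \cite{S}, adapted to the Carleson boxes $\tilde{Q}$ and carried out with explicit control of the constants, so that in fact $\|\mathcal{M}\|_{L^{p}(v)\to L^{p}(\mu)}^{p}\approx_{n,p}c$, where $c$ denotes the least constant in $(2)$. The implication $(1)\Rightarrow(2)$ is routine: writing $\sigma=v^{1-p'}$ and using the identity $(\sigma\chi_{Q})^{p}v=v^{(1-p')p+1}\chi_{Q}=\sigma\chi_{Q}$, one simply tests the strong $(p,p)$ inequality on $f=\sigma\chi_{Q}$ and restricts the resulting integral from $\mathbb{R}_{+}^{n+1}$ to $\tilde{Q}$, which gives
\[
\int_{\tilde{Q}}\mathcal{M}(\sigma\chi_{Q})^{p}\,d\mu\le\int_{\mathbb{R}_{+}^{n+1}}\mathcal{M}(\sigma\chi_{Q})^{p}\,d\mu\le\|\mathcal{M}\|_{L^{p}(v)\to L^{p}(\mu)}^{p}\int_{Q}\sigma\,dx .
\]
Finiteness of $\int_{Q}\sigma$ then follows automatically from the boundedness of $\mathcal{M}$, apart from trivial degenerate cases, which are dealt with separately.

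For $(2)\Rightarrow(1)$ I would first reduce to a dyadic model: since for each of the finitely many shifted dyadic lattices $\mathcal{D}$ one has $\mathcal{M}^{\mathcal{D}}\le\mathcal{M}$ pointwise and $\mathcal{M}f\le C_{n}\sum_{\mathcal{D}}\mathcal{M}^{\mathcal{D}}f$, it suffices to bound each $\mathcal{M}^{\mathcal{D}}$, and $(2)$ for $\mathcal{M}$ immediately yields the same testing bound for $\mathcal{M}^{\mathcal{D}}$. Fix such a $\mathcal{D}$, reduce by monotone convergence to $f\ge0$ bounded with compact support, and set $g=f\sigma^{-1}$, so that $\int g^{p}\,d\sigma=\int f^{p}v=\|f\|_{L^{p}(v)}^{p}$. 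For a fixed $a>1$ perform the Calder\'{o}n--Zygmund decomposition of the level sets $\Omega_{k}=\{\mathcal{M}^{\mathcal{D}}f>a^{k}\}$: the maximal dyadic cubes $\{Q_{j}^{k}\}_{j}$ with $\frac{1}{|Q_{j}^{k}|}\int_{Q_{j}^{k}}f>a^{k}$ satisfy $a^{k}<\frac{1}{|Q_{j}^{k}|}\int_{Q_{j}^{k}}f\le2^{n}a^{k}$, we have $\Omega_{k}=\bigsqcup_{j}\tilde{Q_{j}^{k}}$, and — the key structural point — the sets $\tilde{Q_{j}^{k}}\setminus\Omega_{k+1}$ are pairwise disjoint over all $j$ and $k$ (they lie inside the pairwise disjoint slices $\Omega_{k}\setminus\Omega_{k+1}$). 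Splitting the integral over these slices, on which $\mathcal{M}^{\mathcal{D}}f\le a^{k+1}$, reduces matters to estimating $\sum_{k,j}a^{kp}\mu(\tilde{Q_{j}^{k}}\setminus\Omega_{k+1})$; and using the stopping bound $a^{k}<\frac{\sigma(Q_{j}^{k})}{|Q_{j}^{k}|}\langle g\rangle_{Q_{j}^{k}}^{\sigma}$, with $\langle g\rangle_{Q}^{\sigma}=\sigma(Q)^{-1}\int_{Q}g\,d\sigma$, together with the pointwise lower bound $\mathcal{M}^{\mathcal{D}}(\sigma\chi_{Q_{j}^{k}})\ge\frac{\sigma(Q_{j}^{k})}{|Q_{j}^{k}|}$ valid on $\tilde{Q_{j}^{k}}$, one replaces $a^{kp}\mu(\tilde{Q_{j}^{k}}\setminus\Omega_{k+1})$ by $(\langle g\rangle_{Q_{j}^{k}}^{\sigma})^{p}\int_{\tilde{Q_{j}^{k}}\setminus\Omega_{k+1}}\mathcal{M}^{\mathcal{D}}(\sigma\chi_{Q_{j}^{k}})^{p}\,d\mu$, reaching
\[
\int_{\mathbb{R}_{+}^{n+1}}(\mathcal{M}^{\mathcal{D}}f)^{p}\,d\mu\ \lesssim_{p}\ \sum_{k,j}(\langle g\rangle_{Q_{j}^{k}}^{\sigma})^{p}\int_{\tilde{Q_{j}^{k}}\setminus\Omega_{k+1}}\mathcal{M}^{\mathcal{D}}(\sigma\chi_{Q_{j}^{k}})^{p}\,d\mu .
\]

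The main difficulty is that the cubes $Q_{j}^{k}$ overlap heavily across scales, so applying $(2)$ term by term in this sum would only produce $\sum_{k,j}(\langle g\rangle_{Q_{j}^{k}}^{\sigma})^{p}\sigma(Q_{j}^{k})$, which is far too crude. This is overcome, exactly as in Sawyer's argument, by a principal-cube (stopping-time) reorganization carried out with respect to the $\sigma$-averages $\langle g\rangle^{\sigma}$: declare the maximal $Q_{j}^{k}$ principal and, recursively, let the principal children of a principal cube $F$ be the maximal $Q_{j}^{k}\subsetneq F$ with $\langle g\rangle_{Q_{j}^{k}}^{\sigma}>2\langle g\rangle_{F}^{\sigma}$; call $\mathcal{F}$ the resulting family and $\pi(Q)$ the smallest principal cube containing $Q$. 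Then $\langle g\rangle_{Q_{j}^{k}}^{\sigma}\le2\langle g\rangle_{\pi(Q_{j}^{k})}^{\sigma}$, and for each fixed $F\in\mathcal{F}$ the sets $\tilde{Q_{j}^{k}}\setminus\Omega_{k+1}$ with $\pi(Q_{j}^{k})=F$ are disjoint subsets of $\tilde{F}$ whose integrands are dominated by $\mathcal{M}^{\mathcal{D}}(\sigma\chi_{F})^{p}$; grouping the sum by $F$ and invoking $(2)$ on $\tilde{F}$ yields
\[
\int_{\mathbb{R}_{+}^{n+1}}(\mathcal{M}^{\mathcal{D}}f)^{p}\,d\mu\ \lesssim_{p}\ c\sum_{F\in\mathcal{F}}(\langle g\rangle_{F}^{\sigma})^{p}\sigma(F) .
\]
Finally, the selection rule forces $\sum_{F'\in\mathrm{ch}_{\mathcal{F}}(F)}\sigma(F')\le\frac{1}{2}\sigma(F)$, so $\mathcal{F}$ is a $\sigma$-Carleson family; passing to the pairwise disjoint sets $E_{F}=F\setminus\bigcup_{F'\in\mathrm{ch}_{\mathcal{F}}(F)}F'$, which satisfy $\sigma(E_{F})\ge\frac{1}{2}\sigma(F)$, and using $\langle g\rangle_{F}^{\sigma}\le\inf_{x\in F}M_{\sigma}^{\mathcal{D}}g(x)$ together with the $L^{p}(\sigma)$-boundedness of the $\sigma$-weighted dyadic maximal operator $M_{\sigma}^{\mathcal{D}}$ (with constant depending only on $p$) gives $\sum_{F\in\mathcal{F}}(\langle g\rangle_{F}^{\sigma})^{p}\sigma(F)\lesssim_{p}\int g^{p}\,d\sigma=\|f\|_{L^{p}(v)}^{p}$. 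Summing over the finitely many lattices $\mathcal{D}$ and undoing the reduction on $f$ completes the proof and yields the quantitative equivalence $\|\mathcal{M}\|_{L^{p}(v)\to L^{p}(\mu)}^{p}\approx_{n,p}c$.

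The step I expect to be the genuine obstacle is precisely this overlap of the Calder\'{o}n--Zygmund cubes across scales: naive testing destroys it, and the stopping time must be run with the $\sigma$-weighted averages of $g$ rather than with Lebesgue averages, because in the honest two-weight setting $\sigma$ enjoys no doubling or $A_{\infty}$ property; it is only this choice that makes the final principal family $\sigma$-sparse and hence amenable to the weighted Carleson embedding.
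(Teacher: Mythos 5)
Your proposal is correct in substance and proves the quantitative form the paper actually establishes (Theorem \ref{Thm:CarlesonMax} after the change of weight $\sigma=v^{1-p'}$), but it runs the hard implication along a genuinely different route. Both arguments share the same skeleton: reduction to the $2^n$ shifted dyadic grids (the paper's Lemma \ref{Lmm:DyadicControl}), a Calder\'on--Zygmund decomposition of the level sets $\Omega_k$ of the dyadic operator into Carleson boxes $\tilde{Q_j^k}$, the lower bound $\mathcal{N}(\sigma\chi_{Q_j^k})\geq\sigma(Q_j^k)/|Q_j^k|$ on $\tilde{Q_j^k}$, and a final appeal to the $L^p(\sigma)$-boundedness of the $\sigma$-weighted dyadic maximal operator. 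Where you diverge is in how the cross-scale overlap of the cubes $Q_j^k$ is tamed before the testing condition is invoked: you run a principal-cubes stopping time on the averages $\langle g\rangle^{\sigma}_{Q_j^k}$, obtain a $\sigma$-sparse family $\mathcal{F}$, apply (2) once per stopping cube, and finish with the disjoint sets $E_F$ and $M^{\mathcal{D}}_{\sigma}$; the paper, following Ruiz--Torrea (and ultimately Sawyer--Jawerth), instead linearizes the sum $\sum_{j,k}\gamma_{jk}g_{jk}$ by integrating the distribution function of the $g_{jk}$ against the atomic measure $\gamma_{jk}$, applies the testing condition to the maximal cubes of $\{Q_j^k: g_{jk}>\lambda\}$ at each level $\lambda$, and recovers $\int N_{\sigma}(f)^p\,\sigma$ by the layer-cake formula. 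The distributional argument avoids any corona combinatorics and is what makes the constant-tracking in \cite{RT} essentially mechanical; your stopping-time argument is the modern ``parallel corona'' organization, arguably more transparent and more readily transferable to other testing theorems, at the cost of the usual bookkeeping about principal children. One technical point you should patch: over all $k\in\mathbb{Z}$ the family $\{Q_j^k\}$ has no maximal elements (as $a^k\to 0$ the stopping cubes grow without bound for integrable $f$), so ``declare the maximal $Q_j^k$ principal'' is not literally available; either restrict to $k\geq k_1$ and let $k_1\to-\infty$ by monotone convergence, or truncate the operator as the paper does with $\mathcal{N}^{R}$ (which caps the sidelengths and restores maximal cubes). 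With that initialization fixed, your argument goes through and yields the same quantitative equivalence $\|\mathcal{M}\|_{L^p(v)\to L^p(\mu)}^p\approx_{n,p}c$.
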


Theorems \ref{Thm:R} and \ref{Thm:RT} characterize qualitatively the weak and the strong type $L^p$ boundedness of $\mathcal{M}$ in the sense that they characterize the $L^p$ boundedness but they don't provide a quantitative relationship between the operator norm and the relevant constants associated to the 
couple $(\mu,v)$.   Quantitative estimates for the main operators in Harmonic Analysis have been widely considered by many authors in the last years. 
The first result can be traced back to the work of Buckley \cite{B} where a quantitative version of the classical Muckenhupt theorem is obtained, namely 
\[\|Mf\|_{L^p(w)}\leq c_{n,p}[w]_{A_p}^\frac{1}{p-1}\|f\|_{L^p(w)},\] 
where $M$ is the classical Hardy-Littlewood maximal function.  Later on and motivated by the $A_2$  conjecture for the Ahlfors-Beurling transform formulated in \cite{AIS}, the sharp dependence of weighted inequalities on the $A_p$ constant has been studied thoroughly for operators such as Calder\'on-Zygmund operators (see for instance \cite{PV,P1,P2,H,Le2} or the more recent works \cite{CAR,LN,La,Le3}), rough singular integrals (c.f. \cite{HRT}), commutators (c.f. \cite{CPP}) or the square function (for example \cite{HL}).

Our aim in this paper is to obtain quantitative versions of Theorems \ref{Thm:R} and \ref{Thm:RT} which, as a consequence,  will provide two new quantitative sufficient conditions for the boundedness of $\mathcal{M}$. Since $\mathcal{M}$ controls pointwise $P$, as a direct consequence of those quantitative estimates for $\mathcal{M}$ we will also obtain corresponding quantitative estimates for the Poisson integral.

This paper is organized as follows. In section 2 we state our main
results. In section 3 we introduce some definitions needed to understand
in full detail the main results. Finally in section 4 we prove our
main results.

\subsection*{Aknowledgements }

I would like to thank my advisor Carlos P\'erez for turning my attention
to this problem and for his guidance and support during the elaboration
of this paper.

\section{Main Results}
Before we state our main results we would like to note that the precise definitions of the operators and the constants involved are summarized in section \ref{sec:Preliminaries-and-definitions}.

Our first of result is a quantitative characterization of the weak-type $(p,p)$ boundedness.

\begin{thm}\label{Th:Debilpp} 

Let $\mu$ be a positive Borel measure  on $\mathbb{R}_{+}^{n+1}$ and let $\sigma$ be a weight in $\mathbb{R}^{n}$. Let also $1<p<\infty$, then
\[ 
\|\mathcal{M}(\sigma\cdot) \|_{L^p(\mathbb{R}^n,\sigma)\rightarrow L^{p,\infty}(\mathbb{R}_+^{n+1},\mu)}\simeq  [\mu,\sigma]_{A'_{p}}^{\frac{1}{p}}\]
\end{thm}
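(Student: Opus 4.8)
The plan is to prove the two inequalities separately, exploiting the fact that the relevant weight condition $[\mu,\sigma]_{A'_p}$ is (presumably) defined as
\[
[\mu,\sigma]_{A'_p}=\sup_Q\frac{\mu(\tilde Q)}{|Q|}\left(\frac{1}{|Q|}\int_Q\sigma\,dx\right)^{p-1},
\]
i.e.\ the natural testing/Muckenhoupt constant attached to the pair $(\mu,\sigma)$ when one writes $f=\sigma g$ and measures $g$ in $L^p(\sigma)$. For the \emph{lower bound} $\|\mathcal M(\sigma\cdot)\|_{L^p(\sigma)\to L^{p,\infty}(\mu)}\gtrsim[\mu,\sigma]_{A'_p}^{1/p}$ I would test the operator on $f=\sigma\chi_Q$ for a fixed cube $Q$. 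For any $(x,t)\in\tilde Q$ one has $x\in Q$ and $t\le l(Q)$, so $Q$ itself is admissible in the supremum defining $\mathcal M$, whence $\mathcal M(\sigma\chi_Q)(x,t)\ge\frac{1}{|Q|}\int_Q\sigma=:\lambda_Q$. Choosing the level $\lambda$ slightly below $\lambda_Q$ gives $\mu(\{\mathcal M(\sigma\chi_Q)>\lambda\})\ge\mu(\tilde Q)$, while $\|\sigma\chi_Q\|_{L^p(\sigma)}^p=\int_Q\sigma\,dx$. Plugging into the weak-type inequality and taking the supremum over $Q$ yields exactly $[\mu,\sigma]_{A'_p}^{1/p}$ up to the constant $1$.

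For the \emph{upper bound}, the strategy is the classical Calderón–Zygmund / sparse-domination approach for maximal operators, transplanted to the upper half-space. Fix $\lambda>0$ and consider the level set $E_\lambda=\{(x,t):\mathcal M(\sigma\chi)(x,t)>\lambda\}$ where $\chi=g$ with $\|g\|_{L^p(\sigma)}=1$. Using a Whitney/stopping-time decomposition of the ``shadow'' $\pi(E_\lambda)=\{x:\exists t,(x,t)\in E_\lambda\}\subset\mathbb R^n$ into maximal dyadic cubes $\{Q_j\}$ for which $\frac{1}{|Q_j|}\int_{Q_j}\sigma g>\lambda$, one checks that $E_\lambda\subset\bigcup_j\tilde Q_j$ (this uses that if $(x,t)\in E_\lambda$ there is an admissible cube $Q\ni x$ with $l(Q)\ge t$ and average over $\lambda$, hence $Q$ sits inside some $Q_j$ and $t\le l(Q)\le l(Q_j)$, so $(x,t)\in\tilde Q_j$). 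Therefore
\[
\mu(E_\lambda)\le\sum_j\mu(\tilde Q_j)\le[\mu,\sigma]_{A'_p}\sum_j|Q_j|\left(\frac{1}{|Q_j|}\int_{Q_j}\sigma\right)^{1-p}.
\]
Now on each $Q_j$ we have $\lambda<\frac{1}{|Q_j|}\int_{Q_j}\sigma g=\frac{1}{|Q_j|}\int_{Q_j}g\,\sigma$, i.e.\ $\lambda|Q_j|\le\int_{Q_j}g\,\sigma\le\left(\int_{Q_j}g^p\sigma\right)^{1/p}\left(\int_{Q_j}\sigma\right)^{1/p'}$ by Hölder, so $|Q_j|\left(\frac{1}{|Q_j|}\int_{Q_j}\sigma\right)^{1-p}=|Q_j|^p\left(\int_{Q_j}\sigma\right)^{1-p}\le\lambda^{-p}\int_{Q_j}g^p\sigma$. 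Summing and using the bounded overlap (in fact disjointness) of the $Q_j$ gives $\mu(E_\lambda)\le[\mu,\sigma]_{A'_p}\lambda^{-p}\|g\|_{L^p(\sigma)}^p$, which is the desired weak-type bound with constant $[\mu,\sigma]_{A'_p}^{1/p}$.

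The main technical point — and the step I would be most careful about — is the geometric claim $E_\lambda\subset\bigcup_j\tilde Q_j$, i.e.\ that the Carleson boxes over a \emph{dyadic} maximal decomposition of the shadow actually capture the whole level set of the (non-dyadic, half-space) operator $\mathcal M$. This requires the usual $3^n$-type enlargement trick: one should replace $\mathcal M$ by its dyadic analogue $\mathcal M^{\mathscr D}$ up to finitely many shifted dyadic grids, or equivalently pass from an arbitrary admissible cube $Q$ to a dyadic cube of comparable size containing it, at the cost of a dimensional constant absorbed into $\simeq$. A secondary point is to confirm that the definition of $[\mu,\sigma]_{A'_p}$ in Section~\ref{sec:Preliminaries-and-definitions} is exactly the one above (rather than its $p'$-dual form); if it is the dual form, one simply renames $\sigma\leftrightarrow\sigma^{1-p'}$ and the computation is unchanged. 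Everything else — Hölder, disjointness of the stopping cubes, and the elementary lower-bound test — is routine.
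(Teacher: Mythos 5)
Your proposal is correct and follows essentially the same route as the paper: the lower bound by testing on $f=\chi_Q$, and the upper bound via a Calder\'on--Zygmund covering of the level set by Carleson boxes over maximal dyadic cubes, then H\"older's inequality, the $A'_p$ constant, and disjointness of the stopping cubes. The geometric point you flag (an arbitrary admissible cube need not sit inside a maximal dyadic cube at the same level) is resolved in the paper exactly along the lines you indicate: one passes to a dyadic cube of comparable sidelength meeting the chosen cube, takes maximal dyadic cubes at the lowered level $\alpha/4^{n}$, and covers $E_\alpha$ by the enlarged boxes $\widetilde{3Q_j}$, with the resulting dimensional factors absorbed into $\simeq$.
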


\begin{thm}
\label{Thm:CarlesonMax}
Let $\mu$, $\sigma$ and $p$ as in the previous theorem. Then
\[
\|\mathcal{M}(\sigma\cdot)\|_{L^{p}(\mathbb{R}^{n},\sigma)\rightarrow L^{p}(\mathbb{R}_{+}^{n+1},\mu)} \simeq [\mu,\sigma]_{S'_{p}}. 
\]
\end{thm}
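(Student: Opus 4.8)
The plan is to prove the two-sided estimate by establishing each inequality separately, following the testing-condition philosophy of Sawyer adapted to the Carleson setting, and keeping careful track of constants. For the lower bound $\|\mathcal{M}(\sigma\,\cdot)\|_{L^p(\sigma)\to L^p(\mu)}\gtrsim [\mu,\sigma]_{S'_p}$, I would simply test the operator on the functions $f=\sigma^{?}\chi_Q$ that appear in the definition of the testing constant $[\mu,\sigma]_{S'_p}$ (which, by analogy with Theorem~\ref{Thm:RT}(2), should be $\sup_Q \big(\int_Q\sigma\big)^{-1/p}\big(\int_{\tilde Q}\mathcal{M}(\sigma\chi_Q)^p\,d\mu\big)^{1/p}$). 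Since $\tilde Q$ is exactly the region over which we integrate and $\mathcal{M}(\sigma\chi_Q)$ is being compared with its restriction to $\tilde Q$, testing against $\chi_Q$ immediately gives $[\mu,\sigma]_{S'_p}\le\|\mathcal{M}(\sigma\,\cdot)\|$; this direction is routine.

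The substance is the upper bound. The plan is to run a Calder\'on--Zygmund / stopping-time decomposition directly adapted to the cube-with-tent geometry of $\mathbb{R}_+^{n+1}$. First I would reduce to a dyadic model: by the usual $3^n$ translated dyadic lattices one can dominate $\mathcal{M}f(x,t)$ (up to a dimensional constant) by a sum of dyadic analogues $\mathcal{M}^{\mathscr{D}}f(x,t)=\sup\{\langle|f|\rangle_Q : Q\in\mathscr{D},\ x\in Q,\ \ell(Q)\ge t\}$, so it suffices to bound the dyadic operator. Then, fixing a large truncation, I would form for $f=\sigma g$ the Calder\'on--Zygmund stopping cubes $\{Q_j^k\}$ where the average $\langle \sigma g\rangle$ roughly doubles, obtaining a sparse family $\mathcal{S}$ of cubes in $\mathbb{R}^n$ such that
\[
\int_{\mathbb{R}_+^{n+1}}\mathcal{M}^{\mathscr{D}}(\sigma g)^p\,d\mu \;\lesssim\; \sum_{Q\in\mathcal{S}} \langle \sigma g\rangle_Q^{\,p}\,\mu(\tilde Q).
\]
The key point is that the tents $\tilde Q$ of a sparse family have bounded overlap in a suitable sense, and that on each $\tilde Q$ one has $\mathcal{M}^{\mathscr{D}}(\sigma g)\approx \mathcal{M}^{\mathscr{D}}(\sigma\chi_Q\, g)$ up to the contribution already accounted for by ancestors. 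Next I insert the testing hypothesis: writing $\langle\sigma g\rangle_Q^p\,\mu(\tilde Q)\le \langle\sigma g\rangle_Q^p\cdot \frac{\mu(\tilde Q)}{\int_Q\sigma}\cdot \int_Q\sigma$ does not directly work, so instead I would bound $\langle\sigma g\rangle_Q^p\,\mu(\tilde Q)$ by duality against $\mathcal{M}(\sigma\chi_Q)^{p}$ integrated over $\tilde Q$ — more precisely, one estimates the full sum by a single application of $[\mu,\sigma]_{S'_p}$ together with a maximal-function/Carleson-embedding argument in the $\sigma$-measure on $\mathbb{R}^n$, exactly as in Sawyer's proof that the testing condition is sufficient for the two-weight maximal inequality. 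The sparseness then lets one sum a geometric series in the stopping generations and conclude
\[
\int_{\mathbb{R}_+^{n+1}}\mathcal{M}^{\mathscr{D}}(\sigma g)^p\,d\mu \;\lesssim\; [\mu,\sigma]_{S'_p}^{\,p}\int_{\mathbb{R}^n}|g|^p\,\sigma,
\]
which is the desired bound; a monotone-convergence argument removes the truncation.

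The main obstacle I anticipate is twofold. First, one must verify the ``$\mathcal{M}^{\mathscr D}$ localizes to tents'' step with the correct quantitative loss, i.e.\ that the part of $\mathcal{M}^{\mathscr D}(\sigma g)$ seen on $\tilde Q$ but coming from cubes strictly larger than the stopping cube $Q$ is controlled by $\langle\sigma g\rangle_{\widehat Q}$ for the stopping parent $\widehat Q$; this is where the precise definition of $\tilde Q=\{(x,t):x\in Q,\ 0\le t\le \ell(Q)\}$ and the constraint $\ell(Q)\ge t$ in $\mathcal{M}$ interact, and it is the analogue of the ``principal cubes'' bookkeeping in Sawyer's argument. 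Second, getting the \emph{sharp} linear dependence on $[\mu,\sigma]_{S'_p}$ (rather than $[\mu,\sigma]_{S'_p}$ times extra $A'_p$-type factors) requires that the Carleson-embedding / geometric-summation step be organized so that the testing constant enters exactly once per principal cube and the sparse sum is summed without further use of any $A_p$-type quantity; I would model this on the recent efficient proofs of Sawyer's theorem (the Moen / Lerner-type arguments) and verify that nothing in the extra variable $t$ spoils the bounded-overlap estimate for $\{\tilde Q\}_{Q\in\mathcal S}$.
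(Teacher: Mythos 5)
Your skeleton does match the paper's: lower bound by testing on $\chi_Q$, reduction of $\mathcal{M}$ to finitely many shifted dyadic grids (the paper uses $2^n$ grids $\mathcal{D}_j$ with $Q\subseteq Q_j$ and $l(Q_j)\le 6\,l(Q)$, which preserves the constraint $l\ge t$), and a Calder\'on--Zygmund stopping decomposition of the level sets of a truncated dyadic operator $\mathcal{N}^R$ into tents $\tilde{Q_j^k}$. The gap is in the summation step, which is the whole substance of the hard direction and which you leave to ``exactly as in Sawyer's proof''; moreover the two concrete mechanisms you do put forward would fail as stated. First, tents over a (Lebesgue-)sparse family do \emph{not} have bounded overlap: for a decreasing chain of nested dyadic cubes the family is sparse, yet the number of tents containing a point $(x,t)$ with $x$ in the intersection blows up as $t\to 0$. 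Second, if you keep the full tents in the sparse bound and insert the testing constant via $\bigl(\sigma(Q)/|Q|\bigr)^p\mu(\tilde{Q})\le\int_{\tilde{Q}}\mathcal{M}(\sigma\chi_Q)^p\,d\mu\le[\mu,\sigma]_{S'_p}^p\,\sigma(Q)$, you are left with $\sum_{Q\in\mathcal{S}}\bigl(\frac{1}{\sigma(Q)}\int_Q|g|\sigma\bigr)^p\sigma(Q)$, and Lebesgue sparseness of a stopping family built from \emph{Lebesgue} averages of $\sigma g$ does not give the $\sigma$-Carleson condition $\sum_{Q\in\mathcal{S},\,Q\subseteq R}\sigma(Q)\lesssim\sigma(R)$ needed for the dyadic Carleson embedding in the measure $\sigma$. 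This is precisely the classical pitfall in proving Sawyer's theorem, and ``bounded overlap of the tents'' cannot be the substitute engine.

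What the paper (tracking Ruiz--Torrea, i.e.\ the Sawyer--Jawerth bookkeeping) does at this point, and what your proposal needs, is the following: keep the pairwise disjoint sets $E_j^k=\tilde{Q_j^k}\setminus\Omega_{k+1}\subset\mathbb{R}^{n+1}_+$ rather than the full tents, set $\gamma_{jk}=\mu(E_j^k)\bigl(\sigma(Q_j^k)/|Q_j^k|\bigr)^p$ and $g_{jk}=\bigl(\frac{1}{\sigma(Q_j^k)}\int_{Q_j^k}|f|\sigma\bigr)^p$, write $\sum_{j,k}\gamma_{jk}g_{jk}=\int_0^\infty\gamma(\Gamma(\lambda))\,d\lambda$, and for each $\lambda$ pass to the maximal cubes $Q_i$ of $\{Q_j^k : g_{jk}>\lambda\}$; the disjointness of the $E_j^k$ inside $\tilde{Q_i}$ lets you apply the testing constant exactly once per $Q_i$, and then $\sigma\bigl(\bigcup_i Q_i\bigr)$ is dominated by $\sigma\bigl(\{N_\sigma f^p>\lambda\}\bigr)$, where $N_\sigma$ is the $\sigma$-weighted dyadic maximal operator, whose $L^p(\sigma)$ bound is universal; monotone convergence in the truncation parameter $R$ finishes. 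This is what produces the linear dependence on $[\mu,\sigma]_{S'_p}$ with no extra $A_p$-type factor. Your instinct to fall back on Sawyer's argument is the right one, but the sparse-tent reduction you wrote down does not deliver it without this replacement of tents by the disjoint level-set pieces and the maximal-cube/$N_\sigma$ step.
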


Relying on the preceding result and following the proof for the Hardy-Littlewood
maximal operator obtained by P\'erez and Rela \cite{PR} we can obtain
the following quantitative sufficient condition for the boundedness
of the operator in the weighted setting.
\begin{thm}
\label{Thm:QuantPR}Let $\mu$, $\sigma$ and $p$ as before. Also, let $\Phi$ be a Young function
with conjugate function $\bar{\Phi}$. Then 
\[
\|\mathcal{M}(\cdot\sigma)\|_{L^{p}(\sigma)\rightarrow L^{p}(\mathbb{R}_{+}^{n+1},\mu)}\lesssim\left([\mu,\sigma,\Phi]_{A'_{p}}[\sigma,\bar{\Phi}]_{W_{p}}\right)^{\frac{1}{p}}
\]
\end{thm}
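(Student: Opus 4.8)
The plan is to reduce Theorem \ref{Thm:QuantPR} to Theorem \ref{Thm:CarlesonMax}, following the P\'erez--Rela scheme \cite{PR}. By Theorem \ref{Thm:CarlesonMax} it suffices to bound the testing constant $[\mu,\sigma]_{S'_p}$, i.e.\ to show that for every cube $Q$,
\[
\int_{\tilde Q}\mathcal{M}(\sigma\chi_Q)(x,t)^p\,d\mu(x,t)\;\lesssim\;[\mu,\sigma,\Phi]_{A'_p}[\sigma,\bar\Phi]_{W_p}\,\sigma(Q),
\]
with the implicit constant independent of $Q$. The first step is to decompose $\tilde Q$ dyadically: for $k\ge 0$ write $\tilde Q=\bigcup_k E_k$ where $E_k$ consists of the points $(x,t)\in\tilde Q$ at which $2^k\le \mathcal{M}(\sigma\chi_Q)(x,t)/\langle\sigma\rangle_Q<2^{k+1}$ (or the analogous level-set decomposition adapted to a Carleson/stopping-time family of subcubes of $Q$). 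For each such level one selects, via a Calder\'on--Zygmund stopping-time argument inside $Q$, a family of pairwise disjoint maximal cubes $\{Q_j^k\}$ on which the average of $\sigma\chi_Q$ exceeds the relevant threshold; the key geometric point is that $E_k\subset\bigcup_j \widetilde{Q_j^k}$ up to the part of $\tilde Q$ near the ``top'' where $\mathcal{M}(\sigma\chi_Q)\approx\langle\sigma\rangle_Q$.

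The second step is where the Orlicz maximal function enters. On each selected cube $Q_j^k$ one does \emph{not} estimate $\langle\sigma\rangle_{Q_j^k}$ crudely but instead uses the generalized H\"older inequality in Orlicz space: $\langle\sigma\rangle_{Q_j^k}\le 2\,\|\sigma\|_{\bar\Phi,Q_j^k}\,\|\chi_Q\|_{\Phi,Q_j^k}$ is too lossy, so instead one writes the $A'_p$-type bound $\mu(\widetilde{Q_j^k})\,\langle\sigma\rangle_{Q_j^k}^{\,p-1}\lesssim [\mu,\sigma,\Phi]_{A'_p}\,|Q_j^k|\,\langle\sigma\rangle_{Q_j^k}^{\,p-1}/\|\sigma\|_{\bar\Phi,Q_j^k}^{\,p}$ — i.e.\ the $\Phi$-refined Carleson/$A_p'$ constant absorbs $\mu(\widetilde{Q_j^k})$ against $|Q_j^k|$ and a power of the Orlicz average $\|\sigma\|_{\bar\Phi,Q_j^k}$. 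Summing $\mathcal{M}(\sigma\chi_Q)^p\approx (2^k\langle\sigma\rangle_Q)^p$ over $E_k$ and then over $k$, one collects a sum of the form $\sum_{k,j}|Q_j^k|\,\|\sigma\|_{\bar\Phi,Q_j^k}^{\,p}$ (times $[\mu,\sigma,\Phi]_{A'_p}$), after the $2^k$ geometric factors are dominated using the stopping-time separation (each $Q_j^k$ is counted with a summable weight because the $\langle\sigma\chi_Q\rangle_{Q_j^k}$ grow geometrically in $k$ and the $Q_j^k$ at a fixed level are disjoint).

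The third and final step is to recognize $\sum_{k,j}|Q_j^k|\,\|\sigma\|_{\bar\Phi,Q_j^k}^{\,p}$ as being controlled by $[\sigma,\bar\Phi]_{W_p}\,\sigma(Q)$: this is precisely the defining property of the $W_p$ (Wilson-type ``$\bar\Phi$-bump testing'') constant applied to the sparse/Carleson family $\{Q_j^k\}$ sitting inside $Q$, since $\sum_j|Q_j^k|\approx |E_k$-projection$|$ and the whole double family is a sparse collection whose Carleson packing constant is absorbed into $[\sigma,\bar\Phi]_{W_p}$. Combining the three steps gives the stated bound on $[\mu,\sigma]_{S'_p}$ and hence, via Theorem \ref{Thm:CarlesonMax}, on the operator norm. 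The main obstacle I expect is the second step: organizing the level sets $E_k$ and the stopping cubes so that (a) the geometric factors $2^{kp}$ are genuinely summed against the geometric growth of $\langle\sigma\rangle_{Q_j^k}$ without losing the sharp power of the bump constant, and (b) the ``top'' region of $\tilde Q$ (where the maximal function has not yet exceeded $\langle\sigma\rangle_Q$) is handled — this last piece should contribute a harmless term bounded by $\langle\sigma\rangle_Q^{\,p}\mu(\tilde Q)\lesssim[\mu,\sigma,\Phi]_{A'_p}\sigma(Q)$ directly, but one must check the Orlicz normalizations match. A secondary technical point is verifying that the Orlicz-averaged version of $\mathcal{M}$ that naturally appears when replacing $\langle\,\cdot\,\rangle_{Q_j^k}$ by $\|\cdot\|_{\bar\Phi,Q_j^k}$ is still pointwise dominated by the Carleson maximal operator $\mathcal{M}$ on the relevant set, so that Theorem \ref{Thm:CarlesonMax} can be invoked without modification.
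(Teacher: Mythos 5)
Your overall architecture is the paper's: reduce to bounding the testing constant of Theorem \ref{Thm:CarlesonMax} (the paper does this through the dyadic operators $\mathcal{N}^{\mathcal{D}_j}$, but it is the same reduction), split $\tilde Q$ into a ``top'' region where $\mathcal{M}(\sigma\chi_Q)\lesssim\sigma(Q)/|Q|$ plus level sets, cover the level sets by Calder\'on--Zygmund stopping cubes $\{Q_j^k\}$ forming a sparse family, and then convert $\sum_{k,j}\langle\sigma\chi_Q\rangle_{Q_j^k}^{p}\mu(\widetilde{Q_j^k})$ into the product of the two bump constants. However, the central step --- how the pair $(\Phi,\bar\Phi)$ actually splits --- is misstated, and as written it does not produce the claimed constants. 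The mechanism in the paper (as in P\'erez--Rela) is the factorization $\sigma=\sigma^{1/p}\sigma^{1/p'}$ inside the average followed by the generalized H\"older inequality:
\[
\Big(\tfrac{1}{|Q_j^k|}\int_{Q_j^k}\sigma^{\frac1p}\sigma^{\frac1{p'}}\chi_Q\Big)^{p}\mu(\widetilde{Q_j^k})
\le 2^{p}\big\|\sigma^{\frac1p}\chi_Q\big\|_{\bar\Phi,Q_j^k}^{p}\big\|\sigma^{\frac1{p'}}\big\|_{\Phi,Q_j^k}^{p}\,\tfrac{\mu(\widetilde{Q_j^k})}{|Q_j^k|}\,|Q_j^k|
\le 2^{p}[\mu,\sigma,\Phi]_{A'_p}\big\|\sigma^{\frac1p}\chi_Q\big\|_{\bar\Phi,Q_j^k}^{p}|Q_j^k|,
\]
after which sparseness ($|Q_j^k|\le2|E_j^k|$ with the $E_j^k$ disjoint, and $\|\sigma^{1/p}\chi_Q\|_{\bar\Phi,Q_j^k}\le M_{\bar\Phi}(\sigma^{1/p}\chi_Q)(x)$ for $x\in E_j^k$) gives $\sum_{k,j}\|\sigma^{1/p}\chi_Q\|_{\bar\Phi,Q_j^k}^{p}|Q_j^k|\le 2\int_Q M_{\bar\Phi}(\sigma^{1/p}\chi_Q)^p\le 2[\sigma,\bar\Phi]_{W_p}\sigma(Q)$. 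Your displayed bound $\mu(\widetilde{Q_j^k})\langle\sigma\rangle_{Q_j^k}^{p-1}\lesssim[\mu,\sigma,\Phi]_{A'_p}|Q_j^k|\langle\sigma\rangle_{Q_j^k}^{p-1}/\|\sigma\|_{\bar\Phi,Q_j^k}^{p}$ is not what the definition $[\mu,\sigma,\Phi]_{A'_p}=\sup_Q\frac{\mu(\tilde Q)}{|Q|}\|\sigma^{1/p'}\|_{\Phi,Q}^{p}$ yields: the constant involves the $\Phi$-norm of $\sigma^{1/p'}$, not the $\bar\Phi$-norm of $\sigma$, and indeed the two sides of your inequality have different homogeneity in $\sigma$ (degree $p-1$ versus $p-2$). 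Likewise the sum you are left with, $\sum_{k,j}|Q_j^k|\,\|\sigma\|_{\bar\Phi,Q_j^k}^{p}$, is \emph{not} controlled by $[\sigma,\bar\Phi]_{W_p}\sigma(Q)$: the $W_p$ constant is built from $M_{\bar\Phi}(\sigma^{1/p}\chi_Q)^{p}$, which is of degree one in $\sigma$, while your sum has degree $p$. So the ``normalization check'' you defer at the end is in fact the heart of the argument, not a technicality.

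Two smaller remarks. The ``top'' term is handled exactly as you guessed, but again via the same factorization: $\langle\sigma\rangle_Q^{p}\mu(\tilde Q)\le 2^{p}\|\sigma^{1/p}\|_{\bar\Phi,Q}^{p}\|\sigma^{1/p'}\|_{\Phi,Q}^{p}\mu(\tilde Q)\le 2^{p}[\mu,\sigma,\Phi]_{A'_p}\int_Q M_{\bar\Phi}(\sigma^{1/p}\chi_Q)^{p}$. Also, your worry about summing the geometric factors $2^{kp}$ is taken care of automatically by the Calder\'on--Zygmund selection (Lemma \ref{Lemma:CZ}): on each stopping cube the average of $\sigma\chi_Q$ is comparable to the level $2^{k(n+1)+k_0}$, so no separate summation in $k$ is needed --- the overlap of cubes across levels is absorbed precisely by the sparse sets $E_j^k$. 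Finally, your closing concern about an ``Orlicz-averaged version of $\mathcal{M}$'' is unnecessary: no such operator ever appears; the Orlicz maximal function $M_{\bar\Phi}$ acts only on $\mathbb{R}^n$ inside the $W_p$ constant, and Theorem \ref{Thm:CarlesonMax} is invoked with the ordinary testing constant.
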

The definitions of the constants involved in this result can be found
in section \ref{sec:Preliminaries-and-definitions}.  These kind of conditions, often called ``bump" conditions, were introduced in the 90's in \cite{Pe1} and considered to study sharp two weighted estimates for Singular Integrals \cite{Pe2} and have been often used in recent literature. 

A very interesting recent result obtained by L. Slav\'{\i}kov\'a is that this condition is not neccesary. We remit to \cite{Sl} for details.

The following result generalizes the result due to Lacey and Spencer \cite{LS} which is based on the very recent idea of entropy introduced in \cite{TV}.

\begin{thm}
\label{Thm:QuantLS}Let $\mu$, $\sigma$ and $p$ as before. Then
\[
\|\mathcal{M}(\cdot\sigma)\|_{L^{p}(\sigma)\rightarrow L^{p}(\mathbb{R}_{+}^{n+1},\mu)}\lesssim\left\lceil \mu,\sigma\right\rceil _{p,\varepsilon}^{\frac{1}{p}}.
\]
\end{thm}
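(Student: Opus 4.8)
The cleanest route is to deduce the estimate from the Sawyer-type testing characterization of Theorem \ref{Thm:CarlesonMax}: it suffices to show that the entropy quantity dominates the testing constant, i.e. that $[\mu,\sigma]_{S'_{p}}\lesssim\lceil\mu,\sigma\rceil_{p,\varepsilon}^{1/p}$, after which Theorem \ref{Thm:CarlesonMax} gives the claim. (One could equally well dominate $\mathcal{M}$ directly by a dyadic Carleson sum and argue by hand; the two proofs have the same core.) As usual one first passes to a dyadic model: by a standard adjacent-dyadic-grids argument $\mathcal{M}f\lesssim_{n}\sum_{j}\mathcal{M}^{\mathcal{D}_{j}}f$ pointwise on $\mathbb{R}^{n+1}_{+}$ over finitely many shifted grids $\mathcal{D}_{j}$, where $\mathcal{M}^{\mathcal{D}}$ is the analogous supremum restricted to cubes of a fixed grid $\mathcal{D}$. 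Hence it is enough to prove, for that grid and every $R\in\mathcal{D}$,
\[
\int_{\widetilde{R}}\mathcal{M}^{\mathcal{D}}(\sigma\chi_{R})(x,t)^{p}\,d\mu(x,t)\ \lesssim\ \lceil\mu,\sigma\rceil_{p,\varepsilon}\,\sigma(R).
\]

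To handle the left-hand side I would linearize. Writing $\langle g\rangle_{Q}=\frac{1}{|Q|}\int_{Q}g$, one checks that for $(x,t)\in\widetilde{R}$ the supremum defining $\mathcal{M}^{\mathcal{D}}(\sigma\chi_{R})(x,t)$ runs only over dyadic cubes $Q\subseteq R$ with $x\in Q$ and $\ell(Q)\geq t$; choosing an almost-extremal such cube $Q^{x,t}$ and grouping points by it gives
\[
\int_{\widetilde{R}}\mathcal{M}^{\mathcal{D}}(\sigma\chi_{R})^{p}\,d\mu\ \lesssim\ \sum_{Q}\langle\sigma\rangle_{Q}^{p}\,\mu(E_{Q}),
\]
where the sets $E_{Q}=\{(x,t)\in\widetilde{R}:Q^{x,t}=Q\}\subseteq\widetilde{Q}$ are pairwise disjoint with $\sum_{Q}\mu(E_{Q})=\mu(\widetilde{R})$. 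Then I would run the stopping-time construction inside $R$ adapted to the averages $\langle\sigma\rangle_{(\cdot)}$: let $\mathcal{F}$ be the resulting principal cubes (consecutive members have $\langle\sigma\rangle$ at least doubling, and $\mathcal{F}$ is sparse), and for each selected $Q$ let $F(Q)\in\mathcal{F}$ be the minimal principal cube containing it, so that $\langle\sigma\rangle_{Q}\leq2\langle\sigma\rangle_{F(Q)}$. Collapsing the sum onto $\mathcal{F}$ and setting $\mu_{F}=\mu\big(\bigcup_{Q:F(Q)=F}E_{Q}\big)\leq\mu(\widetilde{F})$ reduces everything to the entropy-weighted Carleson embedding
\[
\sum_{F\in\mathcal{F}}\langle\sigma\rangle_{F}^{p}\,\mu_{F}\ \lesssim\ \lceil\mu,\sigma\rceil_{p,\varepsilon}\,\sigma(R).
\]

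This last inequality is the heart of the matter, and it is where the entropy of $(\mu,\sigma)$ is exploited in the spirit of Treil--Volberg \cite{TV} and Lacey--Spencer \cite{LS}. The definition of $\lceil\mu,\sigma\rceil_{p,\varepsilon}$ packages together a local $A'_{p}$-type ratio and an \emph{entropy factor} $\varepsilon(\rho_{F})$ subject to the Dini condition $\int_{1}^{\infty}\frac{dt}{t\,\varepsilon(t)}<\infty$, so that for each $F$ one has a bound of the form $\langle\sigma\rangle_{F}^{p-1}\mu(\widetilde{F})/|F|\leq\lceil\mu,\sigma\rceil_{p,\varepsilon}\,\varepsilon(\rho_{F})^{-1}$. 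Factoring this out, one is reduced to summing $\sum_{F\in\mathcal{F}}\varepsilon(\rho_{F})^{-1}(\text{a }\sigma\text{-Carleson quantity})$; this is done by splitting $\mathcal{F}$ into the level sets where $\rho_{F}\approx2^{k}$, using that $\rho$ moves at a controlled rate along the principal-cube tree so these level sets behave lacunarly, and summing the geometric-type series $\sum_{k}\varepsilon(2^{k})^{-1}<\infty$ against the pairwise-disjoint stopped $\sigma$-masses. Plugging the resulting estimate back yields the testing bound, hence Theorem \ref{Thm:QuantLS} via Theorem \ref{Thm:CarlesonMax}.

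The step I expect to be the genuine obstacle is precisely this entropy-weighted Carleson embedding: one must read off the correct entropy argument $\rho_{F}$ from the definition of $\lceil\mu,\sigma\rceil_{p,\varepsilon}$, understand how it varies along the tree $\mathcal{F}$, and organize the multi-scale summation so that the disjointness of the stopped regions on $\mathbb{R}^{n}$ (where the $\sigma$-mass lives) can be coupled to the Carleson sum against $\mu$ on the half-space $\mathbb{R}^{n+1}_{+}$ — this base-space/half-space asymmetry being the new feature relative to the plain maximal function treated in \cite{LS}. Everything else (the adjacent-grids reduction, the linearization into a Carleson sum over the selected cubes, and the principal-cube collapse) is by now routine and essentially as in the proof of Theorem \ref{Thm:CarlesonMax}.
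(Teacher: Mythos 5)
Your overall skeleton is the same as the paper's: reduce to the dyadic testing constant of Theorem \ref{Thm:CarlesonMax} via the $2^n$ shifted grids, turn $\int_{\tilde Q}\mathcal{N}(\sigma\chi_Q)^p\,d\mu$ into a Carleson-type sum $\sum_S\langle\sigma\rangle_S^p\mu(\tilde S)$ over a (Lebesgue-)sparse stopping family, and then exploit the entropy constant together with the Dini normalization of $\varepsilon$. (The paper produces the sparse family from the level sets of $\mathcal{N}$, Lemma \ref{Lemma:CZ}, rather than by your linearization, but that difference is cosmetic.) The problem is that the step you yourself flag as ``the genuine obstacle'' --- the entropy-weighted Carleson embedding $\sum_F\langle\sigma\rangle_F^p\mu_F\lesssim\lceil\mu,\sigma\rceil_{p,\varepsilon}\,\sigma(R)$ --- is exactly the content of the theorem, and the mechanism you sketch for it does not close as written.

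Concretely: the definition of the entropy constant gives, per cube,
\[
\langle\sigma\rangle_F^{p-1}\,\frac{\mu(\widetilde F)}{|F|}\;\leq\;\frac{\lceil\mu,\sigma\rceil_{p,\varepsilon}}{\rho(F)\,\varepsilon(\rho(F))},
\]
whereas you drop the factor $\rho(F)^{-1}$ and keep only $\varepsilon(\rho(F))^{-1}$. That factor is not decorative: the stopped $\sigma$-masses $\{\sigma(F)\}_{F\in\mathcal F}$ are \emph{not} pairwise disjoint and do not satisfy a $\sigma$-Carleson packing condition (that would be an $A_\infty$-type hypothesis on $\sigma$, which is precisely what is being avoided). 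What is true is Lebesgue sparseness, and the way it is converted into a $\sigma$-bound is, for the maximal cubes $F'$ of the sub-family where $\rho\approx 2^r$,
\[
\sum_{F\subseteq F'}\sigma(F)\;\leq\;2\sum_{F\subseteq F'}\frac{\sigma(F)}{|F|}\,|E(F)|\;\leq\;2\int_{F'}M(\sigma\chi_{F'})\,dx\;=\;2\,\rho(F')\,\sigma(F')\;\lesssim\;2^{r}\sigma(F'),
\]
which costs a factor $2^r$; this loss is cancelled exactly by the $\rho(F)^{-1}\approx 2^{-r}$ you discarded, leaving $\sum_{r\geq0}\varepsilon(2^r)^{-1}<\infty$ by the Dini condition and $\sum_{F'\,\mathrm{maximal}}\sigma(F')\leq\sigma(R)$ by disjointness of the maximal cubes. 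Without the $\rho^{-1}$ the series becomes $\sum_r 2^r/\varepsilon(2^r)$, which diverges for admissible $\varepsilon$. Also, no ``controlled rate of variation of $\rho$ along the principal-cube tree'' is available or needed: one simply partitions the sparse family according to the value of $\rho(S)$ (the paper additionally partitions by the size of the bumped ratio $\sigma_S^{p-1}\mu(\tilde S)|S|^{-1}\rho_{\sigma,\varepsilon}(S)\approx 2^a$ and sums the resulting geometric series in $a$, which your direct factoring of $\lceil\mu,\sigma\rceil_{p,\varepsilon}$ would avoid). With the per-cube bound corrected and the sparse-plus-$M$ packing argument above inserted, your outline becomes essentially the paper's proof.
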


It's clear that if we replace $\mathcal{M}$ by the standard Hardy-Littlewood
maximal operator and if we take $\mu(x,t)=w(x)\delta_{0}(t)$, where
$\delta_{0}$ denotes Dirac's delta, we recover all the results in
their classical setting.

As a consequence of the preceding results and \eqref{PoissonPuntual}, we obtain also analogue estimates for the Poisson integral. We compile all those estimates in the following

\begin{cor}
Let $\sigma$ be a weight in $\mathbb{R}^{n}$ and let $\mu$ be a Borel measure on $\mathbb{R}_{+}^{n+1}$
and $1<p<\infty$. Then the Poisson integral satisfies the following estimates
\begin{itemize}
\item
 $\|P(\cdot\sigma)\|_{L^{p}(\mathbb{R}^{n},\sigma)\rightarrow L^{p,\infty}(\mathbb{R}_{+}^{n+1},\mu)}\lesssim[\mu,\sigma]_{A'_{p}}^{{\frac{1}{p}}}$
\item $\|P(\cdot\sigma)\|_{L^{p}(\mathbb{R}^{n},\sigma)\rightarrow L^{p}(\mathbb{R}_{+}^{n+1},\mu)}\lesssim[\mu, \sigma]_{S'_{p}}$
\item $\|P(\cdot\sigma)\|_{L^{p}(\sigma)\rightarrow L^{p}(\mathbb{R}_{+}^{n+1},\mu)}\lesssim\left([\mu,\sigma,\Phi]_{A_{p}}[\sigma,\bar{\Phi}]_{W^{p}}\right)^{\frac{1}{p}}$
\item $\|P(\cdot\sigma)\|_{L^{p}(\sigma)\rightarrow L^{p}(\mathbb{R}_{+}^{n+1},\mu)}\lesssim\left\lceil \sigma,\mu\right\rceil _{p,\varepsilon}^{\frac{1}{p}}.$
\end{itemize}
\end{cor}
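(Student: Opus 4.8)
The plan is to deduce all four estimates directly from the pointwise domination \eqref{PoissonPuntual} together with Theorems \ref{Th:Debilpp}, \ref{Thm:CarlesonMax}, \ref{Thm:QuantPR} and \ref{Thm:QuantLS}. First I would record the elementary monotonicity of the target (quasi-)norms: if $0\le g\le h$ on $\mathbb{R}_{+}^{n+1}$, then for every $\lambda>0$ one has $\{(x,t):g(x,t)>\lambda\}\subseteq\{(x,t):h(x,t)>\lambda\}$, hence $\|g\|_{L^{p,\infty}(\mu)}\le\|h\|_{L^{p,\infty}(\mu)}$; likewise, integrating $g^{p}\le h^{p}$ against $\mu$ gives $\|g\|_{L^{p}(\mu)}\le\|h\|_{L^{p}(\mu)}$. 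No property of $P$ beyond its size is used.

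Next, fix a nonzero $f\in L^{p}(\sigma)$. Since $P$ and $\mathcal{M}$ act on the absolute value of their argument, we may assume $f\ge0$ and $\sigma\ge0$, and then applying \eqref{PoissonPuntual} to $f\sigma$ in place of $f$ yields the pointwise bound $|P(f\sigma)(x,t)|\le c_{n}\,\mathcal{M}(f\sigma)(x,t)$ for all $(x,t)\in\mathbb{R}_{+}^{n+1}$. Combining this with the monotonicity just noted gives
\[
\|P(\cdot\sigma)f\|_{L^{p,\infty}(\mu)}\le c_{n}\|\mathcal{M}(\cdot\sigma)f\|_{L^{p,\infty}(\mu)}, \qquad \|P(\cdot\sigma)f\|_{L^{p}(\mu)}\le c_{n}\|\mathcal{M}(\cdot\sigma)f\|_{L^{p}(\mu)}.
\]
Dividing by $\|f\|_{L^{p}(\sigma)}$ and taking the supremum over all such $f$ produces
\[
\|P(\cdot\sigma)\|_{L^{p}(\sigma)\to L^{p,\infty}(\mu)}\lesssim\|\mathcal{M}(\cdot\sigma)\|_{L^{p}(\sigma)\to L^{p,\infty}(\mu)}, \qquad \|P(\cdot\sigma)\|_{L^{p}(\sigma)\to L^{p}(\mu)}\lesssim\|\mathcal{M}(\cdot\sigma)\|_{L^{p}(\sigma)\to L^{p}(\mu)}.
\]
The first bullet of the corollary is then immediate from the ``$\lesssim$'' half of Theorem \ref{Th:Debilpp}; the second from the ``$\lesssim$'' half of Theorem \ref{Thm:CarlesonMax}; the third from Theorem \ref{Thm:QuantPR}; and the fourth from Theorem \ref{Thm:QuantLS}.

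There is no genuine obstacle: the corollary is a soft consequence of pointwise control, and the only point requiring (minimal) care is that \eqref{PoissonPuntual} compares the size $|P(f\sigma)|$ with the nonnegative quantity $\mathcal{M}(f\sigma)$, so the passage to weak- and strong-type $L^{p}(\mu)$ bounds is purely a matter of monotonicity of $t\mapsto t^{p}$ and of the distribution function. If one prefers the estimates written entirely in terms of the couple $(\mu,\sigma)$, one simply inserts the explicit right-hand sides $[\mu,\sigma]_{A'_{p}}^{1/p}$, $[\mu,\sigma]_{S'_{p}}$, $\big([\mu,\sigma,\Phi]_{A'_{p}}[\sigma,\bar\Phi]_{W_{p}}\big)^{1/p}$ and $\lceil\mu,\sigma\rceil_{p,\varepsilon}^{1/p}$ supplied by the respective theorems.
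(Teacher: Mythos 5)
Your proposal is correct and is exactly the argument the paper intends: the corollary follows from the pointwise domination $P(f\sigma)(x,t)\leq c_{n}\mathcal{M}(f\sigma)(x,t)$ of \eqref{PoissonPuntual}, the monotonicity of the $L^{p}(\mu)$ and $L^{p,\infty}(\mu)$ (quasi-)norms, and Theorems \ref{Th:Debilpp}, \ref{Thm:CarlesonMax}, \ref{Thm:QuantPR} and \ref{Thm:QuantLS}. No further comment is needed.
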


\section{\label{sec:Preliminaries-and-definitions}Preliminaries and definitions}

\subsection{Basic definitions}
In this section we recall some basic facts that play a main role in this paper. We also give the precise definitions of the operators and the constants used in our results.

\begin{defn}
Let $f$ a locally integrable function in  $\mathbb{R}^{n}$, we define the function
$\mathcal{M}f$ on $\mathbb{R}_{+}^{n+1}=\{(x,t)\,:\,x\in\mathbb{R}^{n},t\geq0\}$
as
\[
\mathcal{M}f(x,t)=\sup_{x\in Q,\,l(Q)\geq t}\frac{1}{|Q|}\int_{Q}|f(y)|dy.
\]

\end{defn}
We also consider  dyadic versions of this operator. First we recall the definition of dyadic grid (c.f. \cite{Le1}).
\begin{defn} We say that a family of cubes $\mathcal{D}$ is a dyadic grid if it satisfies the following properties
\begin{enumerate}
\item For any $Q\in\mathcal{D}$ its sidelength $l_Q$ is of the form $2^k$, $k\in\mathbb{Z}$
\item If $Q_1,Q_2\in\mathcal{D}$ then $Q_1\cap Q_2\in{Q_1,Q_2,\emptyset}$
\item The cubes of a fixed sidelength $2^k$ form a partition of $\mathbb{R}^n$
\end{enumerate} 
\end{defn}

We will also use following
\begin{defn}Let $\mathcal{D}$ a dyadic grid. Given a cube $Q\in\mathcal{D}$ we call $\mathcal{D}(Q)$ the family of all the cubes of $\mathcal{D}$ that are contained in $Q$.
\end{defn}

\begin{defn}Let $f$ a locally integrable function in  $\mathbb{R}^{n}$ and $\mathcal{D}$ a dyadic grid. We define
the function $\mathcal{\mathcal{N}^{D}}f$ on $\mathbb{R}_{+}^{n+1}=\{(x,t)\,:\,x\in\mathbb{R}^{n},t\geq0\}$ as
\[
\mathcal{N^{D}}f(x,t)=\sup_{\stackrel{{\scriptstyle x\in Q\in\mathcal{D}}}{l(Q)\geq t}}\frac{1}{|Q|}\int_{Q}|f(y)|dy.
\]
We shall drop the superscript $\mathcal{D}$ when we work with just one dyadic grid.
\end{defn}
We denote by $\tilde{Q}$ the cube built from a cube $Q$ as follows
\[
\tilde{Q}=\left\{ (x,t)\in\mathbb{R}_{+}^{n+1}\,:\,x\in Q,\text{ and }0\leq t<l(Q)\right\} ,
\]
in other words, $\tilde{Q}$ is the cube in $\mathbb{R}_{+}^{n+1}$
having $Q$ as a face.

Using the argument given in \cite[Lemma 5.38 pg. 111]{CUMP}, we can
obtain the following Lemma that we will use in the sequel.
\begin{lem}
\label{Lemma:CUMP5.38}Given a Young function $\Phi$ for every $Q$
and every $(x,t)\in\tilde{Q}$ we have that
\[
\mathcal{M}(f\chi_{Q})(x,t)=\sup_{\stackrel{{\scriptstyle x\in P\subseteq Q}}{l(P)\geq t}}\frac{1}{|P|}\int_{P}|f(y)|dy.
\]

\end{lem}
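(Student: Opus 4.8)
The plan is to establish the two inequalities separately. The inequality
\[
\sup_{\stackrel{{\scriptstyle x\in P\subseteq Q}}{l(P)\geq t}}\frac{1}{|P|}\int_{P}|f(y)|dy\;\leq\;\mathcal{M}(f\chi_{Q})(x,t)
\]
is immediate from the definition of $\mathcal{M}$, since every cube $P$ with $x\in P\subseteq Q$ and $l(P)\geq t$ is in particular a cube containing $x$ with sidelength at least $t$, and on such a $P$ we have $\int_P |f\chi_Q| = \int_P |f|$. So the content is in the reverse inequality: given $(x,t)\in\tilde Q$ and any admissible cube $R$ for $\mathcal{M}(f\chi_Q)(x,t)$ — that is, $x\in R$, $l(R)\geq t$ — we must produce a cube $P$ with $x\in P\subseteq Q$, $l(P)\geq t$, and
\[
\frac{1}{|R|}\int_R |f\chi_Q|\;\leq\;\frac{1}{|P|}\int_P |f|.
\]

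First I would dispose of the trivial case $R\subseteq Q$, where one simply takes $P=R$. Otherwise $R\not\subseteq Q$, and the key geometric step is this: since $x\in R\cap Q$ and $(x,t)\in\tilde Q$ forces $t<l(Q)$ while $l(R)\geq t$, one can shrink $R$ down to a cube $P$ still containing $x$, with $Q\subseteq\ldots$ — more precisely, following the argument of \cite[Lemma 5.38]{CUMP}, one chooses $P$ to be a cube with $x\in P$, $P\supseteq R\cap Q$ arranged so that $Q\subseteq P$ or $P\subseteq Q$ appropriately; the cleanest route is to take $P\subseteq Q$ to be the smallest cube containing $x$ with $l(P)\geq t$ that still captures enough mass, but in fact the standard trick is simpler: because $R$ meets $Q$ and $l(R)\ge t$ while $l(Q)>t$, the cube $P:=$ (the cube concentric-ish adjustment) can be taken \emph{inside} $Q$ containing $x$ with $l(P)\ge t$ and with $\int_P|f|\ge \int_{R\cap Q}|f| = \int_R|f\chi_Q|$, while $|P|\le |R|$. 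Hence $\frac{1}{|P|}\int_P|f|\ge \frac{1}{|R|}\int_R|f\chi_Q|$, which is what we want.

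The main obstacle is precisely the geometric construction of $P$: one must verify that when $R$ overflows $Q$, there is genuinely room inside $Q$ to place a cube through $x$ of sidelength $\geq t$ that is no larger than $R$ and contains $R\cap Q$. This uses that $x\in Q$ and $t<l(Q)$, so the "largest admissible cube inside $Q$ through $x$" has sidelength $\geq t$; and it uses that $l(R)\geq t$ together with $R\cap Q\neq\emptyset$ to control where $R\cap Q$ sits relative to that cube. Once $P$ is produced with $R\cap Q\subseteq P\subseteq Q$, $x\in P$, $l(P)\geq t$ and $|P|\leq|R|$, taking the supremum over all admissible $R$ on the left and noting $P$ is admissible on the right finishes the proof. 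I would just cite \cite[Lemma 5.38, p.~111]{CUMP} for the details of this packing argument, since it is verbatim the same, with $f\chi_Q$ in place of their function and the extra coordinate $t$ playing no role beyond the constraint $l(P)\geq t$.
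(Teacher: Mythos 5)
Your proposal is correct and takes essentially the same route as the paper, which itself writes no proof and simply invokes the argument of \cite[Lemma 5.38, p.~111]{CUMP}: the only nontrivial direction is replacing an arbitrary admissible cube $R$ by a cube $P$ with $R\cap Q\subseteq P\subseteq Q$, $x\in P$, $l(P)\geq t$ and $|P|\leq|R|$. Although your middle paragraph is muddled, the conditions you finally impose on $P$ are exactly the right ones and are easily realized (take $P=Q$ when $l(R)\geq l(Q)$, using $t<l(Q)$ from $(x,t)\in\tilde Q$; otherwise slide, coordinate by coordinate, a cube of sidelength $l(R)$ inside $Q$ so that it covers $R\cap Q$), so the argument closes as you describe.
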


\subsection{Orlicz averages}

We recall that $\Phi$ is a Young function if it is a continuous,
nonnegative, strictly increasing and convex function defined on $[0,\infty)$
such that $\Phi(0)=0$ and $\lim_{t\rightarrow\infty}\Phi(t)=\infty$.
The localized Luxembourg norm of a function $f$ with respect to a
Young function $\Phi$ can be defined as follows
\[
\|f\|_{\Phi,Q}=\|f\|_{\Phi(L),Q}=\inf\left\{ \lambda>0\,:\,\frac{1}{|Q|}\int_{Q}\Phi\left(\frac{|f(x)|}{\lambda}\right)dx\leq1\right\}
\]
We note that the case $\Phi(t)=t$ corresponds to the usual average.
We can see these localized norms as a ``different'' way of taking
averages. We can also define the maximal function associated to $\Phi$
as
\[
M_{\Phi}f(x)=\sup_{x\in Q}\|f\|_{\Phi,Q}.
\]
For each Young function there exists an associated complementary Young
function $\bar{\Phi}$ that satisfies the following inequalities
\[
t\leq\Phi^{-1}(t)\bar{\Phi}^{-1}(t)\leq2t.
\]
A basic example of this is $\Phi(t)=t^{p}$. For that Young function
$\bar{\Phi}(t)=t^{p'}$. Some basic facts are that
\[
\frac{1}{|Q|}\int_{Q}|fg|\leq2\|f\|_{\Phi,Q}\|g\|_{\bar{\Phi},Q}
\]
and that
\[
\|M_{\Phi}\|_{L^{p}(\mathbb{R}^{n})}\lesssim\alpha_{p}(\Phi)
\]
where $\alpha_{p}(\Phi)=\left(\int_{1}^{\infty}\frac{\Phi(t)}{t^{p}}\frac{dt}{t}\right)^\frac{1}{p}$ (cf. \cite{Pe1}).

\subsection{$A_{p}$ and bump type conditions}

In this section we give precise definitions of all the ``$A_p$ type'' conditions that appear in this paper. All them resemble in some way their classical counterparts as $A_{p}$ or bump type conditions. We begin with the constant involved in the
weak type $(p,p)$ inequality.
\begin{defn}
Let $1<p<\infty$. Given a weight $\sigma$ and a Borel measure $\mu$
on $\mathbb{R}_{+}^{n+1}$ we define
\[
[\mu,\sigma]_{A'_{p}}:=\sup_{Q}\frac{\mu(\tilde{Q})}{|Q|}\left(\frac{1}{|Q|}\int_{Q}\sigma(x)dx\right)^{p-1}<\infty
\]

\end{defn}
We define now the constants involved in the characterization of the $L^p$ boundedness.
\begin{defn}Let $1<p<\infty$. Given a weight $\sigma \geq 0$ and a Borel measure $\mu$ on $\mathbb{R}^{n+1}$ we define
\[[\mu,\sigma]_{S'_{p}}=\sup_{Q}\left(\frac{\int_{\tilde{Q}}\mathcal{M}(\sigma\chi_{Q})(x,t)^{p}d\mu(x,t)}{\int_{Q}\sigma(x)dx}\right)^{\frac{1}{p}}\]
where the supremum is taken over all cubes of $\mathbb{R}^n$
\end{defn}
The definition of the dyadic variant of this constant is almost the same
\begin{defn}Let $\mathcal{D}$ a dyadic grid. Let $1<p<\infty$. Given a weight $\sigma\geq 0$ and a Borel measure $\mu$ on $\mathbb{R}^{n+1}$ we define
\[[\mu,\sigma]_{S'_{p},\mathcal{D}}=\sup_{Q\in\mathcal{D}}\left(\frac{\int_{\tilde{Q}}\mathcal{M}(\sigma\chi_{Q})(x,t)^{p}d\mu(x,t)}{\int_{Q}\sigma(x)dx}\right)^{\frac{1}{p}}\]
\end{defn}

We observe that the constants we have just defined are quite natural. Indeed, if we consider $\mu(x,t)=w(x)\delta(t)$ where $w$ is a weight and $\delta$ is Dirac's delta we recover the quantitative result obtained in terms of the $S_p$ constant by Moen in \cite{Mo}.

To end this section we give precise definitions of the constants involved in the quantitative sufficient conditions provided in our main results. First we focus on the constants involved in Theorem \ref{Thm:QuantPR}.
\begin{defn}
Given a weight $\sigma$, a Borel measure $\mu$ on $\mathbb{R}_{+}^{n+1}$
and a Young function $\Phi$ we define the quantity
\[
[\mu,\sigma,\Phi]_{A'_{p}}=\sup_{Q}\frac{\mu(\tilde{Q})}{|Q|}\left\Vert \sigma^{\frac{1}{p'}}\right\Vert _{\Phi,Q}^{p}.
\]
We say that $\mu,\sigma$ belong to the $A'_{p,\Phi}$ class if $[\mu,\sigma,\Phi]_{A_{p}}<\infty$.
\end{defn}
We observe that if we choose $\Phi(t)=t^{p'}$ we recover the $[\mu,w]_{A'_{p}}$
constant.

\begin{defn}
Given a weight $\sigma$, a Borel measure $\mu$ on $\mathbb{R}_{+}^{n+1}$
and a Young function $\Phi$ we define the quantity
\[
[\sigma,\Phi]_{W{}_{p}}=\sup_{Q}\frac{1}{\sigma(Q)}\int_{Q}M_{\Phi}\left(\sigma^{\frac{1}{p}}\chi_{Q}\right)^{p}dx
\]

\end{defn}
If we choose $\Phi_{p}(t)=t^{p}$ we obtain
\[
[\sigma,\Phi]_{W\text{}_{p}}=\sup_{Q}\frac{1}{\sigma(Q)}\int_{Q}M\left(\sigma\chi_{Q}\right)dx
\]
which is the well known $A_\infty$ constant that was discovered by Fujii in \cite{F}, rediscovered by Wilson in \cite{W} and shown to be the most suitable one in \cite{HP}  (see also \cite{HPR}).

Let us now turn our attention to the constant involved in Theorem \ref{Thm:QuantLS}.
\begin{defn}
Let $\varepsilon$ be a monotonic increasing function that satisfies 
\[\int_{\frac{1}{2}}^{\infty}\frac{1}{\varepsilon(t)}\frac{dt}{t}=1.
\]
We define the quantity
\[
\left\lceil \mu,\sigma\right\rceil _{p,\varepsilon}=\sup_{Q}\rho_{\sigma,\varepsilon}(Q)\left(\frac{\sigma(Q)}{|Q|}\right)^{p-1}\frac{\mu(\tilde{Q})}{|Q|}
\]
where $\rho_{\sigma,\varepsilon}(Q)=\rho(Q)\varepsilon(\rho(Q))$
and $\rho(Q)=\frac{\int_{Q}M(\sigma\chi_{Q})dx}{\sigma(Q)}$,
\end{defn}

It's worth compairing the constants involved in Theorems \ref{Thm:QuantLS} and \ref{Thm:QuantPR} when we choose $\Phi(t)=t^{p'}$. In that case the constant in Theorem \ref{Thm:QuantPR} is $$\left([\mu,\sigma]_{A'_{p}}[\sigma]_{A_\infty}\right)^{\frac{1}{p}}.$$ 
In contrast with that couple of supremums, the definition of $\left\lceil \mu,\sigma\right\rceil _{p,\varepsilon}^\frac{1}{p}$ ``includes in some way'' the $A'_p$ constant and a ``bumped''  $A_\infty$ constant. It's not clear which of those quantities is larger. 

Observe that since $\rho(Q)\geq 1$ 
$$\left\lceil \mu,\sigma\right\rceil_{p,\varepsilon} \geq \varepsilon(1) 
\sup_{Q} \left(\frac{\sigma(Q)}{|Q|}\right)^{p-1}\frac{\mu(\tilde{Q})}{|Q|}= \varepsilon(1)\, [\mu,\sigma]_{A'_{p}}
$$
which is a not unexpected fact since the strong-type $(p,p)$ implies the weak-type $(p,p)$. Trivially, we have also that
\[[\mu,\sigma]_{A'_{p}}[\sigma]_{A_\infty}
\geq [\mu,\sigma]_{A'_{p}}.\]

\section{Proofs of the main results}

\subsection{Proof of Theorem \ref{Th:Debilpp}}
We first prove that 
\[
[\mu,\sigma]_{A'_p}\lesssim \|\mathcal{M(\cdot\sigma)}\|_{L^p(\mathbb{R}^n,\sigma)\rightarrow L^{p,\infty}(\mathbb{R}_+^{n+1},\mu)}
\]
holds for any $1< p <\infty$. 
Let us assume that $\|\mathcal{M(\cdot\sigma)}\|_{L^p(\mathbb{R}^n,\sigma)\rightarrow L^{p,\infty}(\mathbb{R}_+^{n+1},\mu)}<\infty$ since otherwise there's nothing to prove. We fix a cube $Q$  in  $\mathbb{R}^{n}$. For all $(x,t)\in\tilde{Q}$ by the definition of $\mathcal{M}$
it's clear that
\[
\frac{1}{|Q|}\int_{Q}|f(x)|\sigma(x)dx\leq\mathcal{M}(f\sigma)(x,t).
\]
This yields
\[
\tilde{Q}\subseteq\left\{ (x,t)\in\mathbb{R}_{\text{+ }}^{n+1}\,:\,\mathcal{M}(f\sigma)(x,t)>\frac{1}{|Q|}\int_{Q}|f(x)|\sigma(x)dx\right\}
\]
Then
\[
\mu\left(\tilde{Q}\right)\leq\mu\left\{ (x,t)\in\mathbb{R}_{\text{+ }}^{n+1}\,:\,\mathcal{M}(f\sigma)(x,t)>\frac{1}{|Q|}\int_{Q}|f(x)|\sigma(x)dx\right\}
\]
and recalling that $\|\mathcal{M}(\cdot\sigma)\|_{L^p(\mathbb{R}^n,\sigma)\rightarrow L^{p,\infty}(\mathbb{R}_+^{n+1},\mu)}<\infty$ we have that
\[
\mu\left(\tilde{Q}\right)\leq \frac{\|\mathcal{M}(\cdot\sigma)\|_{L^p(\mathbb{R}^n,w)\rightarrow L^{p,\infty}(\mathbb{R}_+^{n+1},\mu)}^{{p}}}{\left(\frac{1}{|Q|}\int_{Q}|f(x)|\sigma(x)dx\right)^{p}}\int_{\mathbb{R}^{n}}|f|^{p}\sigma.
\]
If we choose $f=\chi_{Q}$ we obtain the desired conclusion.

Now we want to prove that \[\|\mathcal{M}(\cdot\sigma)\|_{L^p(\mathbb{R}^n,\sigma)\rightarrow L^{p,\infty}(\mathbb{R}_+^{n+1},\mu)}\lesssim [\mu,\sigma]_{A'_p}^{{\frac{1}{p}}}.\]
We assume that $[\mu,\sigma]_{A'_p}<\infty$ since otherwise there's nothing to prove. 

Let us call
\[
E_{\alpha}=\left\{ (x,t)\in\mathbb{R}_{\text{+ }}^{n+1}\,:\,\mathcal{M}(f\sigma)(x,t)>\alpha\right\}
\]

We may assume that $f$ is bounded with compact support. Fix $\alpha>0$. If $(x,t)$ then
we can find a cube $R$ containing $x$ with $l(R)\geq t$ and such
that
\[
\frac{1}{|R|}\int_{R}|f(y)\sigma(y)|dy>\alpha.
\]
Let $k$ be the only integer such that
\[
\frac{1}{2^{n(k+1)}}<|R|\leq\frac{1}{2^{kn}}
\]
there is some dyadic cube $Q$ with sidelength $2^{k}$ such that
$\overset{o}{R}\cap Q\not=\emptyset$ and
\[
\int_{R\cap Q}|\sigma(y)f(y)|dy>\frac{\alpha|R|}{2^{n}}>\frac{\alpha|Q|}{4^{n}}
\]
then
\[
\frac{1}{|Q|}\int_{Q}|\sigma(y)f(y)|>\frac{\alpha}{4^{n}}
\]
and we have that
\[
Q\subseteq Q_{j}\in C_{\frac{\alpha}{4^{n}}}
\]
for some $j$ where $C_{\frac{\alpha}{4^{n}}}$ denotes the family
of maximal dyadic cubes $P$ such that
\[
\frac{1}{|P|}\int_{P}|\sigma(y)f(y)|>\frac{\alpha}{4^{n}}
\]
(observe that we can consider such a family since $f$ is integrable).
Also $x\in R\subseteq3Q\subseteq3Q_{j}$. This yields $t\leq l(R)\leq l(3Q_{j})$
and then $(x,t)\in\tilde{3Q_{j}}$. Thus we have seen that
\[
E_{\alpha}\subseteq\bigcup_{j}\tilde{3Q_{j}}.
\]
Then
\[
\begin{split}\mu(E_{\alpha}) & \leq\sum_{j}\mu\left(\tilde{3Q_{j}}\right)=3^{np}\sum_{j}\frac{\mu\left(\tilde{3Q_{j}}\right)}{|3Q_{j}|^{p}}|Q_{j}|^{p}\\
 & \leq\frac{3^{np}}{\alpha^{p}}\sum_{j}\frac{\mu\left(\tilde{3Q_{j}}\right)}{|3Q_{j}|^{p}}\left(\int_{Q_{j}}|\sigma(x)f(x)|dx\right)^{p}\\
 & \leq\frac{3^{np}}{\alpha^{p}}\sum_{j}\frac{\mu\left(\tilde{3Q_{j}}\right)}{|3Q_{j}|^{p}}\left(\int_{Q_{j}}\sigma(x)dx\right)^{p-1}\int_{Q_{j}}|f(x)|^{p}\sigma(x)dx\\
 & \leq\frac{3^{np}}{\alpha^{p}}\sum_{j}\frac{\mu\left(\tilde{3Q_{j}}\right)}{|3Q_{j}|^{p}}\left(\int_{Q_{j}}\sigma(x)dx\right)^{p-1}\int_{Q_{j}}|f(x)|^{p}\sigma(x)dx\\
 & \leq\frac{3^{np}}{\alpha^{p}}[\mu,\sigma]_{A'_{p}}\sum_{j}\int_{Q_{j}}|f(x)|^{p}\sigma(x)dx\\
 & \leq\frac{3^{np}}{\alpha^{p}}[\mu,\sigma]_{A'_{p}}\int_{\mathbb{R}^{n}}|f(x)|^{p}\sigma(x)dx.
\end{split}
\]

\subsection{Proof of Theorem \ref{Thm:CarlesonMax}}

The proof of Theorem \ref{Thm:CarlesonMax} is based on the corresponding dyadic version, namely, 
\begin{thm}
\label{Thm:CarlesonMaxDyad}
Let $\mu$ be a positive Borel measure  on $\mathbb{R}_{+}^{n+1}$ and let $\sigma$ be a weight in $\mathbb{R}^{n}$. Let  $\mathcal{D}$ a dyadic grid and let $1<p<\infty$. Then
\[
\|\mathcal{N^\mathcal{D}}(\sigma\cdot)\|_{L^{p}(\mathbb{R}^{n},\sigma)\rightarrow L^{p}(\mathbb{R}_{+}^{n+1},\mu)} \simeq  [\mu,\sigma]_{S'_{p}}^{\mathcal{D}}
\]
\end{thm}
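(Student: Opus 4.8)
The plan is to prove the dyadic characterization $\|\mathcal{N}^{\mathcal{D}}(\sigma\cdot)\|_{L^p(\sigma)\to L^p(\mu)}\simeq [\mu,\sigma]_{S'_p}^{\mathcal{D}}$ by establishing the two inequalities separately, then to deduce Theorem \ref{Thm:CarlesonMax} from it by the standard three-grid (or $3^n$-grid) argument plus Lemma \ref{Lemma:CUMP5.38}.

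\textbf{The easy direction ($\gtrsim$).} Fix a testing cube $Q\in\mathcal{D}$ and apply the assumed boundedness to the function $f=\sigma^{-1}\cdot(\sigma\chi_Q)=\chi_Q$; more precisely, we use $f$ supported in $Q$. By Lemma \ref{Lemma:CUMP5.38}, for $(x,t)\in\tilde Q$ the quantity $\mathcal{N}^{\mathcal D}(\sigma\chi_Q)(x,t)$ coincides with the supremum of averages of $\sigma$ over dyadic subcubes $P\subseteq Q$ with $x\in P$, $l(P)\ge t$ (the localization to $\mathcal{D}(Q)$ is exactly what is needed, and here we are in the dyadic setting so no enlargement is lost). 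Hence
\[
\int_{\tilde Q}\mathcal{N}^{\mathcal D}(\sigma\chi_Q)^p\,d\mu\le \int_{\mathbb{R}_+^{n+1}}\mathcal{N}^{\mathcal D}(\sigma\chi_Q)^p\,d\mu\le \|\mathcal{N}^{\mathcal D}(\sigma\cdot)\|^p\int_{\mathbb{R}^n}\chi_Q^p\,\sigma=\|\mathcal{N}^{\mathcal D}(\sigma\cdot)\|^p\,\sigma(Q),
\]
and taking the supremum over $Q\in\mathcal{D}$ gives $[\mu,\sigma]_{S'_p}^{\mathcal D}\le\|\mathcal{N}^{\mathcal D}(\sigma\cdot)\|$.

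\textbf{The hard direction ($\lesssim$).} Here I would run a Calderón--Zygmund stopping-time/sparse decomposition adapted to the measure. Fix $f\ge 0$ bounded with compact support and set $g=f\sigma$. For each integer $k$ let $\{Q_j^k\}$ be the maximal dyadic cubes with $\frac{1}{|Q_j^k|}\int_{Q_j^k} g > 2^k$; these are the usual Calderón--Zygmund cubes at level $2^k$, they are nested downward in $k$, and the family $\mathcal{S}=\{Q_j^k\}$ is sparse with the principal cubes $E_j^k=Q_j^k\setminus\bigcup_{\ell}Q_\ell^{k+1}$ pairwise disjoint and $|E_j^k|\gtrsim|Q_j^k|$. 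The key geometric point is that $\mathbb{R}_+^{n+1}$ decomposes (up to the $\mu$-null set where $\mathcal{N}^{\mathcal D}g=0$) as the disjoint-in-$(x,t)$ union over $k,j$ of the "stopping regions" consisting of those $(x,t)$ for which the supremum defining $\mathcal{N}^{\mathcal D}g(x,t)$ first exceeds $2^k$ at the cube $Q_j^k$; on such a region $\mathcal{N}^{\mathcal D}g(x,t)\le 2^{k+1}$, these regions are contained in $\tilde{Q_j^k}$, and their $\mu$-measures are controlled by $\mu(\tilde{Q_j^k})$. Therefore
\[
\int_{\mathbb{R}_+^{n+1}}(\mathcal{N}^{\mathcal D}g)^p\,d\mu\lesssim \sum_{k,j} 2^{kp}\,\mu\bigl(\tilde{Q_j^k}\cap\{\mathcal{N}^{\mathcal D}g\le 2^{k+1},\ \text{stopped at }Q_j^k\}\bigr).
\]
To close, I would not bound each $\mu(\tilde{Q_j^k})$ crudely, but instead replace $2^k$ by the average $\frac{1}{|Q_j^k|}\int_{Q_j^k}g\simeq 2^k$ and recognize that on $\tilde{Q_j^k}$ one has $\frac{1}{|Q_j^k|}\int_{Q_j^k} g\le \mathcal{M}(\sigma\chi_{Q_j^k})(x,t)$ by Lemma \ref{Lemma:CUMP5.38}; then the testing constant enters through
\[
2^{kp}\,\mu(\tilde{Q_j^k})\lesssim \int_{\tilde{Q_j^k}}\mathcal{N}^{\mathcal D}(\sigma\chi_{Q_j^k})^p\,d\mu\le \bigl([\mu,\sigma]_{S'_p}^{\mathcal D}\bigr)^p\,\sigma(Q_j^k),
\]
and one finishes using sparseness $\sigma(Q_j^k)\lesssim\sigma_{\text{avg}}(Q_j^k)|E_j^k|$ together with a maximal-function/duality estimate $\sum_{k,j}\bigl(\tfrac{1}{\sigma(Q_j^k)}\int_{Q_j^k} f\sigma\bigr)^p\sigma(Q_j^k)\lesssim \|f\|_{L^p(\sigma)}^p$, which is the standard $L^p(\sigma)$-boundedness of the dyadic maximal operator with respect to the measure $\sigma$ (Carleson embedding / the $p$-th power trick applied to the sparse family). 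The main obstacle is bookkeeping the overlap: one must verify carefully that the stopping regions in $(x,t)$ attached to distinct $(k,j)$ are essentially disjoint so that summing the local testing bounds does not lose more than a dimensional/$p$-dependent constant; this is where the dyadic structure and the maximality of the $Q_j^k$ are essential.

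\textbf{From the dyadic result to Theorem \ref{Thm:CarlesonMax}.} Finally I would pass from $\mathcal{N}^{\mathcal D}$ to $\mathcal{M}$ using the fact that $\mathcal{M}f(x,t)\le C_n\max_{i}\mathcal{N}^{\mathcal D_i}f(x,t)$ for a finite collection $\{\mathcal{D}_i\}_{i=1}^{3^n}$ of shifted dyadic grids (every cube $Q$ is contained in some dyadic cube of one of the $\mathcal{D}_i$ with comparable side length), so that $\|\mathcal{M}(\sigma\cdot)\|\lesssim\max_i\|\mathcal{N}^{\mathcal D_i}(\sigma\cdot)\|\lesssim\max_i[\mu,\sigma]_{S'_p}^{\mathcal D_i}\lesssim [\mu,\sigma]_{S'_p}$, since $[\mu,\sigma]_{S'_p}^{\mathcal D_i}\le C_n[\mu,\sigma]_{S'_p}$ by comparing each dyadic cube with the honest cube of comparable size and using Lemma \ref{Lemma:CUMP5.38}; the reverse inequality $[\mu,\sigma]_{S'_p}\lesssim\|\mathcal{M}(\sigma\cdot)\|$ is the easy testing direction applied directly to $\mathcal{M}$, exactly as above.
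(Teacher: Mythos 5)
Your testing direction and the reduction of Theorem \ref{Thm:CarlesonMax} to the dyadic case via finitely many grids are fine and match the paper. The hard direction, however, has a genuine gap at the final summation step. First, two local slips: $2^k\simeq\frac{1}{|Q_j^k|}\int_{Q_j^k}f\sigma$ is not comparable to $\frac{\sigma(Q_j^k)}{|Q_j^k|}$, and the pointwise claim $\frac{1}{|Q|}\int_Q f\sigma\le\mathcal{M}(\sigma\chi_Q)(x,t)$ is false; so your display $2^{kp}\mu(\tilde{Q_j^k})\lesssim\int_{\tilde{Q_j^k}}\mathcal{N}^{\mathcal D}(\sigma\chi_{Q_j^k})^p d\mu$ must carry the extra factor $\bigl(\frac{1}{\sigma(Q_j^k)}\int_{Q_j^k}f\sigma\bigr)^p$. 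Granting that, your argument reduces to the claim
\[
\sum_{k,j}\Bigl(\frac{1}{\sigma(Q_j^k)}\int_{Q_j^k}f\sigma\Bigr)^{p}\sigma(Q_j^k)\lesssim\|f\|_{L^p(\sigma)}^{p},
\]
which you attribute to the $L^p(\sigma)$-boundedness of the $\sigma$-dyadic maximal operator or a Carleson embedding. This is false in general: the $Q_j^k$ are stopping cubes for \emph{Lebesgue} averages of $f\sigma$, and the embedding needs the packing condition $\sum_{Q_j^k\subseteq R}\sigma(Q_j^k)\lesssim\sigma(R)$, which neither Lebesgue sparseness nor bounded multiplicity gives. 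Concretely, in $n=1$ take $\sigma=f=\chi_{[0,2^{-N})}$: the selected cubes are the $N$ dyadic ancestors of $[0,2^{-N})$, each with $\frac{1}{\sigma(Q)}\int_Q f\sigma=1$ and $\sigma(Q)=2^{-N}$, so the left side is $N2^{-N}$ while $\|f\|_{L^p(\sigma)}^p=2^{-N}$. Thus applying the testing condition to every stopping cube and summing cannot close the argument. (A minor further point: with consecutive levels $2^k$ the family is not Lebesgue-sparse when $n\ge1$; one needs spacing like $2^{k(n+1)}$ as in Lemma \ref{Lemma:CZ} --- but fixing that does not repair the main issue.)

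The paper's proof avoids exactly this trap by the Sawyer--Jawerth device. It keeps the pairwise disjoint sets $E_j^k=\tilde{Q_j^k}\setminus\{\mathcal{N}^R(\sigma f)\ge2^{k+1}\}$ \emph{in the upper half-space}, writes the relevant sum as $\sum_{j,k}\gamma_{jk}g_{jk}=\int_0^\infty\gamma(\Gamma(\lambda))\,d\lambda$ with $\gamma_{jk}=\mu(E_j^k)\bigl(\sigma(Q_j^k)/|Q_j^k|\bigr)^p$ and $g_{jk}=\bigl(\frac{1}{\sigma(Q_j^k)}\int_{Q_j^k}f\sigma\bigr)^p$, and for each fixed $\lambda$ applies the testing condition only at the \emph{maximal} cubes $Q_i$ of $\{Q_j^k:(k,j)\in\Gamma(\lambda)\}$: the disjointness of the $E_j^k$ lets all smaller cubes be absorbed into $\int_{\tilde{Q_i}}\mathcal{N}^R(\sigma\chi_{Q_i})^p d\mu\le[\mu,\sigma]_{S'_p}^p\sigma(Q_i)$, and since the maximal cubes are disjoint, $\sum_i\sigma(Q_i)\le\sigma(\{N_\sigma f^p>\lambda\})$, after which integration in $\lambda$ only requires the universal $L^p(\sigma)$ bound for the $\sigma$-weighted dyadic maximal operator $N_\sigma$. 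In the counterexample above this works precisely because at each $\lambda$ only the top cube of the chain contributes $\sigma$-mass. To repair your proof you would need to import this $\lambda$-wise maximal-cube step (or an equivalent principal-cubes/corona decomposition with respect to $\sigma$-averages); note also that the paper runs the argument for the truncations $\mathcal{N}^R$, so that each level set $\Omega_k$ is exactly a union of cubes $\tilde{Q_j^k}$, and concludes by monotone convergence.
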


The proof of Theorem \ref{Thm:CarlesonMaxDyad} consists essentially in tracking the constants in the proof of this result in \cite{RT}.

\begin{proof}[Proof of Theorem \ref{Thm:CarlesonMaxDyad}] The proof is the same for every choice of dyadic grid $\mathcal{D}$ so for simplicity we denote $\mathcal{N}^\mathcal{D}$ by $\mathcal{N}$ and $[\mu,\sigma]_{S'_{p},\mathcal{D}}$ by $[\mu,\sigma]_{S'_{p}}$.

The fact that $[\mu,\sigma]_{S'_{p}}\lesssim\|\mathcal{N}(\sigma\cdot)\|_{L^{p}(\mathbb{R}^{n},\sigma)\rightarrow L^{p}(\mathbb{R}_{+}^{n+1},\mu)}$ is straightforward. It suffices to choose $f=\chi_Q$.
We focus now on the converse. As always we may assume that $f$ is bounded with compact support.
We need to introduce the following truncations of the dyadic maximal operator:
\[
\mathcal{N}^{R}(\sigma f)(x,t)=\sup_{x\in Q,t\leq l(Q)\leq R}\frac{1}{|Q|}\int_{Q}|f(x)|dx
\]
We observe that $\mathcal{N}^{R}f(x,t)=0$ for $t>R$ and that
\[
\lim_{R\rightarrow\infty}\mathcal{N}^{R}(\sigma f)(x,t)\uparrow\mathcal{N}(\sigma f)(x,t).
\]
Let
\[
\Omega_{k}=\left\{ \left(x,t\right)\in\mathbb{R}^{n+1}_+\,:\,\mathcal{N}^{R}(\sigma f)(x,t)>2^{k}\right\} \qquad k\in\mathbb{Z}.
\]
We will also need a Calder\'on-Zygmund decomposition suited to that truncations. The following Lemma contains that decomposition.

\begin{lem}[\cite{RT}] \label{dyadicCZcovering}Let $g\in L^1$. For each $k\in\mathbb{Z}$ we can choose a family $\{Q_{j}^{k}\}_{j\in J_{k}}\subseteq\mathcal{D}$ such that
\begin{enumerate}
\item $\frac{1}{|Q_{j}^{k}|}\int_{Q_{j}^{k}}|g(x)|dx>2^{k}$.
\item The interiors of $\tilde{Q_{j}^{k}}$ are disjoint.
\item $\Omega_{k}=\left\{ \left(x,t\right)\in\mathbb{R}^{n+1}_+\,:\,\mathcal{N}^{R}(g)(x,t)>2^{k}\right\}=\bigcup_{j\in J_{k}}\tilde{Q_{j}^{k}}$
\end{enumerate}
\end{lem}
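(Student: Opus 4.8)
\subsection*{Proof proposal for Lemma \ref{dyadicCZcovering}}

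The plan is to run the standard dyadic Calder\'on--Zygmund stopping-time construction of \cite{RT}, adapted to the ``tent'' geometry of the sets $\tilde{Q}$. For each $k\in\mathbb{Z}$, let $\{Q_j^k\}_{j\in J_k}$ be the collection of \emph{maximal} cubes $Q\in\mathcal{D}$ with $l(Q)\leq R$ for which
\[
\frac{1}{|Q|}\int_{Q}|g(x)|\,dx>2^{k}.
\]
The first thing I would check is that this family is well defined: since $g\in L^{1}$, any cube with the displayed property satisfies $|Q|<2^{-k}\|g\|_{L^{1}}$, so the sidelengths of the competing cubes are bounded above and, together with the constraint $l(Q)\leq R$, this forces every such cube to be contained in a maximal one. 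Property (1) is then immediate from the definition of the family. Moreover, two distinct maximal dyadic cubes cannot be nested, hence cannot intersect, so the $Q_j^k$ are pairwise disjoint.

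Property (2) will follow from the product structure of the tents: the interior of $\tilde{Q}$ in $\mathbb{R}^{n+1}_+$ is contained in $\mathrm{int}(Q)\times[0,\infty)$, so if $Q_1,Q_2\in\mathcal{D}$ are disjoint then $\tilde{Q_1}$ and $\tilde{Q_2}$ have disjoint interiors. Applying this to the pairwise disjoint family $\{Q_j^k\}_{j\in J_k}$ gives (2).

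For property (3) I would prove the two inclusions separately. If $(x,t)\in\tilde{Q_j^k}$, then $x\in Q_j^k$ and $t\leq l(Q_j^k)\leq R$, so $Q_j^k$ is admissible in the supremum defining $\mathcal{N}^{R}(g)(x,t)$, whence $\mathcal{N}^{R}(g)(x,t)\geq\frac{1}{|Q_j^k|}\int_{Q_j^k}|g|>2^{k}$; this proves $\bigcup_{j\in J_k}\tilde{Q_j^k}\subseteq\Omega_{k}$. Conversely, if $(x,t)\in\Omega_{k}$, then since $\mathcal{N}^{R}(g)(x,t)>2^{k}$ is a supremum there must exist an admissible cube $Q$ (that is, $x\in Q$ and $t\leq l(Q)\leq R$) with $\frac{1}{|Q|}\int_{Q}|g|>2^{k}$. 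Such a $Q$ belongs to the stopping family, so $Q\subseteq Q_j^k$ for some $j$, and then $x\in Q_j^k$ and $t\leq l(Q)\leq l(Q_j^k)\leq R$, i.e. $(x,t)\in\tilde{Q_j^k}$, giving $\Omega_{k}\subseteq\bigcup_{j\in J_k}\tilde{Q_j^k}$.

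I do not expect a serious obstacle here. The only point requiring a little care is the behaviour of the last inclusion on the top faces $\{t=l(Q_j^k)\}$, which is harmless once the convention for $\tilde{Q}$ is fixed consistently with the statement of Carleson's theorem; the truncation parameter $R$ plays no essential role in the lemma itself (it merely adds the constraint $l(Q)\leq R$ to the stopping cubes and guarantees $\mathcal{N}^{R}(g)(x,t)=0$ for $t>R$), but it is what later allows one to pass to the limit via $\mathcal{N}^{R}(g)\uparrow\mathcal{N}(g)$ in the proof of Theorem \ref{Thm:CarlesonMaxDyad}.
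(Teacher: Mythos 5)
Your construction is correct and is precisely the standard maximal-dyadic-cube stopping-time argument that the paper itself does not spell out but defers to \cite{RT}; properties (1)--(3) follow exactly as you indicate. The only delicate point is the one you flag, the top faces $\{t=l(Q_{j}^{k})\}$: the paper uses both the convention $0\le t\le l(Q)$ (introduction) and $0\le t<l(Q)$ (preliminaries) for $\tilde{Q}$, and the stated equality in (3) requires the conventions in $\tilde{Q}$ and in the constraint $t\le l(Q)$ defining $\mathcal{N}^{R}$ to be matched consistently, which is the "minor modification" implicit in the paper's citation.
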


The proof of that lemma is the same as the one contained \cite{RT} with minor modifications.

Applying Lemma with $g=\sigma f$, we can consider the sets
\[
E_{j}^{k}=\tilde{Q_{j}^{k}}\setminus\left\{ \left(x,t\right)\,:\,\mathcal{N}^{R}(\sigma f)(x,t)\geq2^{k+1}\right\}.
\]
The sets $E_{j}^{k}$ have disjoint interiors and
\[
\begin{split} & \int_{\mathbb{R}_{+}^{n+1}}\mathcal{N}^{R}(\sigma f)(x,t)^{p}d\mu(x,t)\leq\sum_{j,k}\int_{E_{j}^{k}}\mathcal{N}^{R}(\sigma f)(x,t)^{p}d\mu(x,t)\\
&\leq\sum_{j,k}2^{(k+1)p}\mu(E_{j}^{k})\leq2^{p}\sum_{j,k}\left(\frac{1}{|Q_{j}^{k}|}\int_{Q_{j}^{k}}|\sigma(x)f(x)|dx\right)^{p}\mu(E_{j}^{k})
\end{split}
\]
Following ideas from Sawyer and Jawerth we introduce the following notation
\[
\begin{split}\gamma_{jk}=\mu(E_{j}^{k})\left(\frac{\sigma(Q_{j}^{k})}{|Q_{j}^{k}|}\right)^{p} & \qquad g_{jk}=\left(\frac{1}{\sigma(Q_{j}^{k})}\int_{Q_{j}^{k}}|\sigma(x)f(x)|dx\right)^{p}\\
X=\left\{ (k,j)\,:\,k\in\mathbb{Z},\,j\in J_{k}\right\}  & \text{ with atomic measure }\gamma_{jk}\\
\Gamma(\lambda)=\left\{ (k,j)\in X\,:\,g_{jk}>\lambda\right\}
\end{split}
\]
Using this notation we have that
\[
\begin{split} & \int_{\mathbb{R}_{+}^{n+1}}\mathcal{N}^{R}(\sigma f)(x,t)^{p}d\mu(x,t)\leq\sum_{j,k}\gamma_{jk}g_{jk}=\\
 & =2^{p}\int_{0}^{\infty}\gamma\left(\left\{ (k,j)\,:\,g_{jk}>\lambda\right\} \right)d\lambda=2^{p}\int_{0}^{\infty}\left(\sum_{(k,j)\in\Gamma(\lambda)}\gamma_{jk}\right)d\lambda\\
 & =2^{p}\int_{0}^{\infty}\left(\sum_{(k,j)\in\Gamma(\lambda)}\mu(E_{j}^{k})\left(\frac{\sigma(Q_{j}^{k})}{|Q_{j}^{k}|}\right)^{p}\right)d\lambda\\
 & =2^{p}\int_{0}^{\infty}\left(\sum_{(k,j)\in\Gamma(\lambda)}\int_{E_{j}^{k}}\left(\frac{\sigma(Q_{j}^{k})}{|Q_{j}^{k}|}\right)^{p}d\mu(x,t)\right)d\lambda
\end{split}
\]
If we call $Q_{i}$ the maximal cubes of the family $\{Q_{j}^{k}\,:\,(k,j)\in\Gamma(\lambda)\}$
we can rewrite the preceding sum as follows
\[
\begin{split} & 2^{p}\int_{0}^{\infty}\left(\sum_{i}\sum_{\stackrel{{\scriptstyle (k,j)\in\Gamma(\lambda)}}{Q_{j}^{k}\subset Q_{i}}}\int_{E_{j}^{k}}\left(\frac{\sigma(Q_{j}^{k})}{|Q_{j}^{k}|}\right)^{p}d\mu(x,t)\right)d\lambda\\
 & \leq2^{p}\int_{0}^{\infty}\left(\sum_{i}\sum_{\stackrel{{\scriptstyle (k,j)\in\Gamma(\lambda)}}{Q_{j}^{k}\subset Q_{i}}}\int_{E_{j}^{k}}\mathcal{N}^{R}(\sigma\chi_{Q_{i}})(x,t)^{p}d\mu(x,t)\right)d\lambda\\
 & =2^{p}\int_{0}^{\infty}\left(\sum_{i}\int_{\tilde{Q_{i}}}\mathcal{N}^{R}(\sigma\chi_{Q_{i}})(x,t)^{p}d\mu(x,t)\right)d\lambda
\end{split}
\]
Now we use 2 and we obtain
\[
\begin{split} & \leq[\mu,v]_{S'_{p}}^{p}2^{p}\int_{0}^{\infty}\left(\sum_{i}\int_{Q_{i}}\sigma(x)dx\right)d\lambda=2^{p}\int_{0}^{\infty}\sigma\left(\bigcup_{i}Q_{i}\right)d\lambda\\
 & =[\mu,v]_{S'_{p}}^{p}2^{p}\int_{0}^{\infty}\sigma\left(\bigcup_{(k,j)\in\Gamma(\lambda)}Q_{j}^{k}\right)d\lambda
\end{split}
\]
Now from the definition of $\Gamma(\lambda)$ and the $g_{jk}'s$
it's clear that
\[
\bigcup_{(k,j)\in\Gamma(\lambda)}Q_{j}^{k}\subseteq\left\{ x\in\mathbb{R}^{n}\,:\,N_{\sigma}\left(f\right)(x)^{p}>\lambda\right\}
\]
Then
\[
\begin{split} & \leq[\mu,v]_{S'_{p}}^{p}2^{p}\int_{0}^{\infty}\sigma\left(\bigcup_{(k,j)\in\Gamma(\lambda)}Q_{j}^{k}\right)d\lambda\\
 & \leq[\mu,v]_{S'_{p}}^{p}2^{p}\int_{0}^{\infty}\sigma\left(\left\{ x\in\mathbb{R}^{n}\,:\,N_{\sigma}\left(f\right)(x)^{p}>\lambda\right\} \right)d\lambda\\
 & =[\mu,v]_{S'_{p}}^{p}2^{p}\int_{\mathbb{R}^{n}}N_{\sigma}\left(f\right)(x)^{p}\sigma(x)dx\\
 & \lesssim_{n,p}[\mu,v]_{S'_{p}}^{p}\int_{\mathbb{R}^{n}}|f(x)|^{p}\sigma(x)dx
\end{split}
\]
We have obtained that for every $R>0$
\[
\int_{\mathbb{R}_{+}^{n+1}}\mathcal{N}^{R}(\sigma f)(x,t)^{p}d\mu(x,t)\leq c_{n,p}[\mu,v]_{S'_{p}}^{p}\int_{\mathbb{R}^{n}}|f(x)|^{p}\sigma(x)dx.
\]

Since $\mathcal{N}^{R}f(x,t)\uparrow\mathcal{N}f(x,t)$ by monotone
convergence theorem we're done.

\end{proof}

Theorem \ref{Thm:CarlesonMax} will be proved reducing it to the dyadic case. To do that we will
show next that $\mathcal{M}$ is controlled pointwise by a sum of $2^{n}$ $\mathcal{N}^{\mathcal{D}_{j}}$ operators. 

\begin{lem}
\label{Lmm:DyadicControl}We can build $2^{n}$ dyadic grids such
that
\[
\mathcal{M}f(x,t)\leq6^{n}\sum_{j=1}^{2^{n}}\mathcal{N}^{\mathcal{D}_{j}}f(x,t).
\]
\end{lem}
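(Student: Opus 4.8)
The plan is to deduce the lemma from the classical ``one–third trick'': build two well–chosen dyadic grids on $\mathbb{R}$, pass to $\mathbb{R}^{n}$ by taking the $2^{n}$ product grids, and then compare averages. \emph{(Step 1, one dimension.)} Let $\mathcal{D}^{1}$ be the standard dyadic grid of $\mathbb{R}$ and let $\mathcal{D}^{2}$ be the grid whose generation of sidelength $2^{k}$ consists of the intervals $2^{k}\mathbb{Z}+(-1)^{k}\tfrac{2^{k}}{3}+[0,2^{k})$; since the offsets of consecutive generations differ by an integer multiple of the smaller sidelength, $\mathcal{D}^{2}$ is again a dyadic grid in the sense of the definition above. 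I claim that for every bounded interval $I\subseteq\mathbb{R}$, with $m=m(I)$ the least integer such that $2^{m}>3\,l(I)$ (so that $3\,l(I)<2^{m}\le 6\,l(I)$), some interval $J\in\mathcal{D}^{1}\cup\mathcal{D}^{2}$ of sidelength $2^{m}$ satisfies $I\subseteq J$. Indeed, the union of the two generation‑$2^{m}$ endpoint sets $2^{m}\mathbb{Z}$ and $2^{m}\mathbb{Z}+(-1)^{m}\tfrac{2^{m}}{3}$ has all consecutive points at distance $\ge\tfrac{2^{m}}{3}>l(I)$, so the interior of $I$ meets at most one of these two sets, and $I$ is then contained in a generation‑$2^{m}$ interval of the grid whose endpoint set its interior avoids.

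\emph{(Step 2, product grids.)} Let $\mathcal{D}_{j}$, $j=1,\dots,2^{n}$, be the product grids $\mathcal{D}^{s_{1}}\times\cdots\times\mathcal{D}^{s_{n}}$ with $(s_{1},\dots,s_{n})\in\{1,2\}^{n}$; being products of one‑dimensional dyadic grids, these are dyadic grids in the sense of the definition above. Given a cube $Q=I_{1}\times\cdots\times I_{n}$, take $m$ as in Step 1 for the common sidelength $l(Q)$; for each coordinate $k$ choose $J_{k}\in\mathcal{D}^{1}\cup\mathcal{D}^{2}$ of sidelength $2^{m}$ with $I_{k}\subseteq J_{k}$. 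Since all the $J_{k}$ have the same sidelength $2^{m}$, the product $P:=J_{1}\times\cdots\times J_{n}$ is a cube belonging to one of the $\mathcal{D}_{j}$, with $Q\subseteq P$ and $|P|=2^{mn}\le\bigl(6\,l(Q)\bigr)^{n}=6^{n}|Q|$.

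\emph{(Step 3, comparison of averages.)} Fix $(x,t)\in\mathbb{R}_{+}^{n+1}$ and let $Q$ be any cube with $x\in Q$ and $l(Q)\ge t$. Choosing $P\in\mathcal{D}_{j}$ as in Step 2, we have $x\in Q\subseteq P$ and $l(P)=2^{m}\ge l(Q)\ge t$, so $P$ is admissible in the supremum defining $\mathcal{N}^{\mathcal{D}_{j}}f(x,t)$, and hence
\[
\frac{1}{|Q|}\int_{Q}|f(y)|\,dy\le\frac{|P|}{|Q|}\cdot\frac{1}{|P|}\int_{P}|f(y)|\,dy\le 6^{n}\,\mathcal{N}^{\mathcal{D}_{j}}f(x,t)\le 6^{n}\sum_{j=1}^{2^{n}}\mathcal{N}^{\mathcal{D}_{j}}f(x,t).
\]
Taking the supremum over all such cubes $Q$ yields $\mathcal{M}f(x,t)\le 6^{n}\sum_{j=1}^{2^{n}}\mathcal{N}^{\mathcal{D}_{j}}f(x,t)$, which is the assertion.

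The only step carrying genuine content is Step 1: one has to exhibit the shifted grid $\mathcal{D}^{2}$ and check that the dilation by $\tfrac13$ prevents the two grids from being simultaneously ``badly positioned'' at the same scale — this, combined with the choice $3\,l(I)<2^{m}\le 6\,l(I)$, is exactly what produces the constant $6$ (and hence $6^{n}$). Steps 2 and 3 are routine bookkeeping; the only subtlety to keep an eye on in Step 1 is the harmless case in which an endpoint of $I$ happens to lie on a generation‑$2^{m}$ point, where one may simply replace $P$ by a slightly larger dyadic cube.
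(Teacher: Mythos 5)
Your argument is correct and reaches the same inequality, but it is more self-contained than the paper's: the paper simply invokes Lerner's covering result \cite[Prop.~5.1]{Le1} (there are $2^{n}$ dyadic grids $\mathcal{D}_{j}$ so that every cube $Q$ lies in some $Q_{j}\in\mathcal{D}_{j}$ with $l(Q_{j})\leq 6\,l(Q)$) and then performs exactly your Step 3. What you do differently is to reprove that covering result from scratch via the one-third trick: the shifted one-dimensional grids with offsets $(-1)^{k}2^{k}/3$, the verification that consecutive generations remain nested, the choice $3\,l(I)<2^{m}\leq 6\,l(I)$, and the passage to the $2^{n}$ product grids. This buys a fully self-contained proof (and makes visible where the constant $6^{n}$ comes from), at the cost of redoing an argument the paper treats as a black box; your Steps 2--3 coincide with the paper's comparison of averages, including the device of summing over all $2^{n}$ grids because the good grid depends on the cube $Q$.

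One point to tighten: your handling of the boundary case. Working with interiors and then saying one may ``replace $P$ by a slightly larger dyadic cube'' is not innocuous, since passing to the dyadic parent doubles the sidelength and would degrade $6^{n}$ to $12^{n}$; moreover, if $x$ sits on the face of the half-open cube $P$ you also lose the requirement $x\in P$ in the supremum defining $\mathcal{N}^{\mathcal{D}_{j}}$. The clean fix stays inside your own Step 1: since the two generation-$2^{m}$ endpoint sets $2^{m}\mathbb{Z}$ and $2^{m}\mathbb{Z}+(-1)^{m}\tfrac{2^{m}}{3}$ are mutually separated by at least $\tfrac{2^{m}}{3}>l(I)$, the \emph{closed} interval $I$ can meet at most one of them; choosing the grid whose endpoint set $I$ avoids entirely places $I$ strictly inside an open complementary interval, hence inside a half-open generation-$2^{m}$ interval of that grid, with no enlargement and with every point of $I$ (in particular the relevant coordinate of $x$) in the dyadic interval. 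With that adjustment the proof is complete and yields the stated constant.
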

\begin{proof}
To prove this lemma we need to use the fact that we can find a finite
family of dyadic grids such that every cube is ``well controlled''
by a cube in one these families
\begin{lem*}[{\cite[Prop. 5.1]{Le1}}]
There are $2^{n}$ dyadic grids $\mathcal{D}_{j}$ such that for
any cube $Q$, there exists a cube $Q_{j}\in\mathcal{D}_{j}$ such
that $Q\subseteq Q_{j}$ and $l(Q_{j})\leq6l(Q)$.
\end{lem*}
We note that this result appears implicitly in  \cite[p. 136-137]{GCRdF}.

Armed with that lemma we can establish our result. Let us take $(x,t)\in\mathbb{R}_{+}^{n+1}$
and consider
\[
\mathcal{M}f(x,t)=\sup_{x\in Q,\,l(Q)\geq t}\frac{1}{|Q|}\int_{Q}|f(x)|dx.
\]
If we choose any of the averages
\[
\frac{1}{|Q|}\int_{Q}|f(x)|dx
\]
involved in that supremum we can take $Q_{j}\in\mathcal{D}_{j}$ such
that $Q\subseteq Q_{j}$ and $l(Q)\leq l(Q_{j})\leq6l(Q).$ This yields
\[
\frac{1}{|Q|}\int_{Q}|f(x)|dx\leq6^{n}\frac{1}{|Q_{j}|}\int_{Q_{j}}|f(x)|dx\leq6^{n}\mathcal{N}^{\mathcal{D}_{j}}f(x,t)
\]
since $l(Q_{j})\geq l(Q)\geq t$. Consequently
\[
\mathcal{M}f(x,t)\leq6^{n}\sum_{j=1}^{2^{n}}\mathcal{N}^{\mathcal{D}_{j}}f(x,t).
\]

\end{proof}
To end this section we give the proof of Theorem \ref{Thm:CarlesonMax}.
\begin{proof}[Proof of Theorem \ref{Thm:CarlesonMax}]
The proof of $[\mu,\sigma]_{S'_{p}}\lesssim\|\mathcal{M}(\sigma\cdot)\|_{L^{p}(\mathbb{R}^{n},\sigma)\rightarrow L^{p}(\mathbb{R}_{+}^{n+1},\mu)}$ is exactly the same of the dyadic version Theorem \ref{Thm:CarlesonMaxDyad}. It's simply testing with cubes.  

Now suppose that $[\mu,\sigma ]_{S'_{p},\mathcal{D}_{j}}<\infty$, since otherwise there's nothing to prove.  It's clear that for every dyadic grid $\mathcal{D}_j$  
\begin{equation}\label{SpSpD}
[\mu,\sigma]_{S'_{p},\mathcal{D}_{j}}\leq [\mu,\sigma]_{S'_{p}}.
\end{equation}
Consequently Theorem \ref{Thm:CarlesonMaxDyad}
applies for every dyadic grid, that is,
\[
\left(\int_{\mathbb{R}_{+}^{n+1}}\mathcal{N}^{\mathcal{D}_{j}}f(x,t)^{p}d\mu(x,t)\right)^{\frac{1}{p}}\leq c_{n,p} [\mu,\sigma]_{S'_{p},\mathcal{D}_{j}}\left(\int_{\mathbb{R}^{n}}|f(x)|^{p}v(x)dx\right)^{\frac{1}{p}}.
\]
Now we apply Lemma \ref{Lmm:DyadicControl} and use \eqref{SpSpD} to obtain
\[
\begin{split}\left(\int_{\mathbb{R}_{+}^{n+1}}\mathcal{M}f\sigma(x,t)^{p}d\mu(x,t)\right)^{\frac{1}{p}} & \leq\left(\int_{\mathbb{R}_{+}^{n+1}}\left[\sum_{j=1}^{2^{n}}\mathcal{N}^{\mathcal{D}_{j}}(f\sigma)(x,t)\right]^{p}d\mu(x,t)\right)^{\frac{1}{p}}\\
 & \leq\sum_{j=1}^{2^{n}}\left(\int_{\mathbb{R}_{+}^{n+1}}\mathcal{N}^{\mathcal{D}_{j}}(f\sigma)(x,t)^{p}d\mu(x,t)\right)^{\frac{1}{p}}\\
 & \leq\sum_{j=1}^{2^{n}}c_{n,p}[\mu,\sigma]_{S_{p},\mathcal{D}_{j}}\left(\int_{\mathbb{R}^{n}}|f(x)|^{p}\sigma(x)dx\right)^{\frac{1}{p}}\\
 & \leq c_{n,p}[\mu,\sigma]_{S_{p}}\left(\int_{\mathbb{R}^{n}}|f(x)|^{p}\sigma(x)dx\right)^{\frac{1}{p}}
\end{split}
\]
as we wanted to prove.
%\note{no hay p}

\end{proof}

\subsection{Proof of Theorems \ref{Thm:QuantPR} and \ref{Thm:QuantLS}}

To proof both Theorems we need a Calderón-Zygmund type decomposition suited to our purposes. We obtain that decomposition in the following

\begin{lem}\label{Lemma:CZ} Let $\mathcal{D}$ be a dyadic grid. Let $f$ be a locally integrable function such that $\supp f \subseteq Q_0\in\mathcal{D}$. Let $k_0\in\mathbb{Z}$ be the smallest integer such that 
$$
\frac{1}{|Q_0|}\int_{Q_0}|f|\leq 2^{k_0}
$$
For each $k\in\mathbb{Z}$ such that $k\geq 0$ we can choose a family $\{Q_{j}^{k}\}_{j\in J_{k}}\subseteq\mathcal{D}(Q_0)$ such that 
\begin{enumerate}
\item $2^{k(n+1)+k_0}<\frac{1}{|Q_{j}^{k}|}\int_{Q_{j}^{k}}|f(x)|dx\leq 2^{k(n+1)+k_0+n}$.
\item The interiors of $\tilde{Q_{j}^{k}}$ with $j\in J_k$ are pairwise disjoint.
\item $\Omega_{k}=\left\{(x,t)\in \tilde{Q_0} : \mathcal{N}^\mathcal{D}f(x,t)>2^{k(n+1)+k_0}\right\}=\bigcup_{j\in J_{k}}\tilde{Q_{j}^{k}}$
\end{enumerate}
Furthermore, the family of cubes $\{Q_{j}^{k}\}_{j\in \cup J_k}$ is sparse, that is, for each $Q_{j}^{k}$ we can take a measurable
subset $E_{j}^{k}$ such that
\[
\frac{1}{2}|Q_{j}^{k}|\leq|E_{j}^{k}|
\]
and the sets $E_{j}^{k}$ are pairwise disjoint.\end{lem}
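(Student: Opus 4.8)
The statement combines a Calderón–Zygmund stopping-time argument (for parts (1)–(3)) with a standard sparseness extraction (for the "furthermore" part). The plan is to perform the stopping time inside the fixed cube $Q_0$ at the geometrically spaced thresholds $2^{k(n+1)+k_0}$, $k\ge 0$, exploiting that $\mathcal{N}^{\mathcal{D}}$ is a dyadic maximal operator so that its superlevel sets in $\tilde{Q_0}$ are exactly unions of the $\tilde{Q}$ attached to stopping cubes.

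\textbf{Step 1: construct the stopping cubes.} Fix $k\ge 0$ and let $\lambda_k=2^{k(n+1)+k_0}$. Let $\{Q_j^k\}_{j\in J_k}$ be the collection of \emph{maximal} cubes $Q\in\mathcal{D}(Q_0)$ with $\frac{1}{|Q|}\int_Q|f|>\lambda_k$. Such maximal cubes exist: any competitor $Q$ satisfies $|Q|<\lambda_k^{-1}\int_{Q_0}|f|$, so the side lengths are bounded above, while by the choice of $k_0$ we have $\frac{1}{|Q_0|}\int_{Q_0}|f|\le 2^{k_0}\le\lambda_k$, so $Q_0$ itself is never selected and the maximal ones lie strictly inside. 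The lower bound in (1) is immediate. For the upper bound, the dyadic parent $\widehat{Q_j^k}$ of $Q_j^k$ is not selected, hence $\frac{1}{|\widehat{Q_j^k}|}\int_{\widehat{Q_j^k}}|f|\le\lambda_k$; since $|\widehat{Q_j^k}|=2^n|Q_j^k|$ this gives $\frac{1}{|Q_j^k|}\int_{Q_j^k}|f|\le 2^n\lambda_k=2^{k(n+1)+k_0+n}$, which is (1).

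\textbf{Step 2: the geometry of $\widetilde{(\cdot)}$ and the superlevel set identity.} Because the $Q_j^k$ are maximal dyadic cubes they are pairwise disjoint; moreover if $Q_j^k\cap Q_{j'}^k=\emptyset$ with $j\ne j'$ then $\tilde Q_j^k$ and $\tilde Q_{j'}^k$ have disjoint interiors — this is where one uses that $\tilde Q=\{(x,t): x\in Q,\ 0\le t<l(Q)\}$ sits over $Q$ with no horizontal overlap — giving (2). For (3): if $(x,t)\in\Omega_k$, i.e. $(x,t)\in\tilde Q_0$ with $\mathcal{N}^{\mathcal{D}}f(x,t)>\lambda_k$, then there is $Q\in\mathcal{D}$ with $x\in Q$, $l(Q)\ge t$ and $\frac1{|Q|}\int_Q|f|>\lambda_k$; using $\supp f\subseteq Q_0$ and $x\in Q_0$ one checks the relevant cube can be taken in $\mathcal{D}(Q_0)$ (replace $Q$ by $Q\cap Q_0$-type reasoning, or note $Q\subseteq Q_0$ once $Q$ meets $Q_0$ and the average is nonzero), so $Q\subseteq Q_j^k$ for some $j$, hence $x\in Q_j^k$ and $t\le l(Q)\le l(Q_j^k)$, i.e. $(x,t)\in\tilde Q_j^k$. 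The reverse inclusion $\tilde Q_j^k\subseteq\Omega_k$ is clear since for $(x,t)\in\tilde Q_j^k$ one has $x\in Q_j^k$, $t\le l(Q_j^k)$ and $\frac1{|Q_j^k|}\int_{Q_j^k}|f|>\lambda_k$, so $\mathcal{N}^{\mathcal{D}}f(x,t)>\lambda_k$.

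\textbf{Step 3: sparseness.} Now consider the whole family $\{Q_j^k\}_{j,\,k\ge 0}$. One shows it is sparse by the usual argument: fix a cube $Q_j^k$; its children in the family are cubes $Q_{j'}^{k+m}$, $m\ge 1$, contained in $Q_j^k$, and by (1) applied to $Q_j^k$ and to each such child the average of $|f|$ jumps by at least a factor $2^{(m)(n+1)}\ge 2^{n+1}>2$ at each generation, so $\sum$(measures of maximal sub-selected cubes inside $Q_j^k$)$\,\le\,\frac{2^n\lambda_k\,|Q_j^k|}{2^{(n+1)}\lambda_k}\cdot(\text{geometric sum})\le\frac12|Q_j^k|$; setting $E_j^k=Q_j^k\setminus\bigcup\{\text{maximal selected cubes strictly inside }Q_j^k\}$ gives $|E_j^k|\ge\frac12|Q_j^k|$, and these sets are pairwise disjoint by construction. \textbf{The main obstacle} is the bookkeeping in Step 3 — making sure "selected at a higher level $k+m$ and contained in $Q_j^k$" really does force the average to have grown geometrically, so that the union of the next-generation selected cubes occupies at most half of $Q_j^k$; this needs the precise exponents $k(n+1)+k_0$ together with the upper bound in (1), and is the one place where the spacing of the thresholds (rather than consecutive powers of $2$) is essential.
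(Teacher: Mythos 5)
Your construction is essentially the paper's: the same stopping-time selection of maximal cubes in $\mathcal{D}(Q_0)$ at the thresholds $2^{k(n+1)+k_0}$, and your exceptional set $E_j^k=Q_j^k\setminus\bigcup\{\text{maximal selected cubes strictly inside }Q_j^k\}$ coincides with the paper's $E_j^k=Q_j^k\setminus H_{k+1}$, since $H_{k+1}\cap Q_j^k$ is exactly the union of the level-$(k+1)$ selected cubes inside $Q_j^k$, which in turn is the union of all selected cubes strictly inside $Q_j^k$. The one place to tighten is your ``geometric sum'' in Step 3: summing the contributions of all generations $m\geq1$ gives
\[
\sum_{m\geq1}2^{\,n-m(n+1)}\,|Q_j^k|=\frac{1/2}{1-2^{-(n+1)}}\,|Q_j^k|>\tfrac12|Q_j^k|,
\]
so that bookkeeping does not deliver the claimed factor $\tfrac12$. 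No series is needed: the maximal selected cubes strictly inside $Q_j^k$ are pairwise disjoint and each has $|f|$-average greater than $2^{(k+1)(n+1)+k_0}=2^{n+1}\cdot 2^{k(n+1)+k_0}$, so their total measure is at most $2^{-(k+1)(n+1)-k_0}\int_{Q_j^k}|f|\leq\frac{2^{n}}{2^{n+1}}|Q_j^k|=\tfrac12|Q_j^k|$ by the upper bound in (1) --- which is exactly the paper's computation with $H_{k+1}$. Note also that the upper bound in (1) guarantees no cube is selected at two different levels, which is what makes the disjointness of the $E_j^k$ across levels unproblematic in both versions.
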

\begin{proof}
First of all a dyadic version of Lemma \ref{Lemma:CUMP5.38} allows us to say that for each $(x,t)\in \tilde{Q_0}$ we have that
$$
\mathcal{N}^\mathcal{D}f(x,t)=\sup_{x\in Q\in\mathcal D, Q\subseteq Q_0, l(Q)\geq t}\frac{1}{|Q|}\int_Q |f(y)|dy
$$
Then we observe that for each $(x,t)\in \Omega_k$ we can find a cube $Q_{(x,t)}\in\mathcal{D}$ such that $x\in Q_{(x,t)}$ $Q_{(x,t)}\subseteq Q_0$ and $l(Q_{(x,t)})\geq t$. Since all that cubes are contained in $Q_0$ we can choose among them the maximal ones and call them $\{Q_j^k\}_{j\in J_k}$. It's clear that this collection of cubes satisfies conditions (1), (2) and (3). 
To end the proof we have to prove that the family $\{Q_{j}^{k}\}_{\cup_{k\geq k_0}J_k}$ is sparse.

Let us call for each $k\geq k_0$
\[
H_{k}=\left\{ x\in Q\,:\,\frac{1}{|P|}\int_{P}|f(y)|dy>2^{k(n+1)+k_0},\quad\text{for some }P\in\mathcal{D}, P\subseteq Q\right\}
\]
We observe that
\[
|Q_{j}^{k}\cap H_{k+1}|=\sum_{m}|Q_{j}^{k}\cap Q_{m}^{k+1}|
\]
Now since $Q_{j}^{k},Q_{m}^{k+1}\in\mathcal{D}$, we have
that $Q_{j}^{k}\cap Q_{m}^{k+1}\not=\emptyset$ implies that either
$Q_{j}^{k}\subseteq Q_{m}^{k+1}$ or $Q_{m}^{k+1}\subseteq Q_{j}^{k}$.
Now we observe that from the definition of $H_{k}$ it follows that
$Q_{m}^{k+1}\subseteq H_{k}$. Consequently $Q_{m}^{k+1}\subseteq Q_{j}^{k}$
by maximality. Taking that into account and using $(1)$ and $(2)$
\[
\begin{split}|Q_{j}^{k}\cap H_{k+1}| & =\sum_{m}|Q_{j}^{k}\cap Q_{m}^{k+1}|=\sum_{Q_{m}^{k+1}\subseteq Q_{j}^{k}}|Q_{m}^{k+1}| \leq\sum_{Q_{m}^{k+1}\subseteq Q_{j}^{k}}\frac{1}{2^{(k+1)(n+1)+k_0}}\int_{Q_{m}^{k+1}}|f(y)|dy\\
 &\leq\frac{1}{2^{(k+1)(n+1)+k_0}}\int_{Q_{j}^{k}}|f(y)|dy\leq\frac{2^{k(n+1)+k_0+n}}{2^{(k+1)(n+1)+k_0}}|Q_{j}^{k}|=\frac{1}{2}|Q_{j}^{k}|
\end{split}
\]
Consequently we have that
\[
|Q_{j}^{k}|=|Q_{j}^{k}\setminus H_{k+1}|+|Q_{j}^{k}\cap H_{k+1}|\le|Q_{j}^{k}\setminus H_{k+1}|+\frac{1}{2}|Q_{j}^{k}|
\]
and we obtain the desired conclusion taking $E_{j}^{k}=Q_{j}^{k}\setminus H_{k+1}.$
\end{proof}

To give the proof of Theorem \ref{Thm:QuantPR} we will adapt the
argument in P\'erez-Rela \cite{PR}. 

\begin{proof}[Proof of Theorem \ref{Thm:QuantPR}]
As in the proof of Theorem \ref{Thm:CarlesonMax} it is enough to consider $\mathcal{N}^{\mathcal{D}_{j} }$, 
\[
\| \mathcal{N}^{\mathcal{D}_{j} } (\sigma \cdot)   \|_{L^{p}(\mathbb{R}^{n},\sigma)\rightarrow L^{p}(\mathbb{R}_{+}^{n+1},\mu)} \simeq 
[\mu,\sigma]_{S'_{p},\mathcal{D}_{j}}    \qquad j=1,\cdots, 2^n
\]
where
\[
[\mu,\sigma]_{S'_{p},\mathcal{D}_{j}} =\sup_{Q \in \mathcal{D}_{j} }\left(\frac{\int_{\tilde{Q}} \mathcal{N}^{\mathcal{D}_{j} }(\sigma \chi_Q) (x,t)^{p}d\mu(x,t)}{\int_{Q}\sigma(x)dx}\right)^{\frac{1}{p}}.
\]
Hence, it is enough to prove that
\[
[\mu,\sigma]_{S'_{p},\mathcal{D}_j}\lesssim\left([\mu,\sigma,\Phi]_{A'_{p}}[\sigma,\bar{\Phi}]_{W^{p}}\right)^{\frac{1}{p}}.
\]
Let us fix a cube $Q$. Consider $k_{0}\in\mathbb{Z}$ such that
\[
2^{k_{0}-1}<\frac{\sigma(Q)}{|Q|}\leq2^{k_{0}}.
\]
Now we call
\[
A=\left\{ (x,t)\in\tilde{Q}\,;\,\mathcal{M}(\sigma\chi_{Q})(x,t)\leq2\frac{\sigma(Q)}{|Q|}\right\} .
\]
If $(x,t)\in\tilde{Q}\setminus A$ then
\[
\mathcal{M}(\sigma\chi_{Q})(x,t)>2\frac{\sigma(Q)}{|Q|}>2^{k_{0}}\geq\frac{\sigma(Q)}{|Q|}.
\]
We call
\[
\Omega_{k}=\left\{ (x,t)\in\tilde{Q}\,;\,\mathcal{M}(\sigma\chi_{Q})(x,t)>2^{k(n+1)+k_0}\right\} .
\]
Now we see that
\[
\begin{split}\int_{\tilde{Q}}\mathcal{M}(\sigma\chi_{Q})(x,t)^{p}d\mu(x,t) & =\int_{A}\mathcal{M}(\sigma\chi_{Q})(x,t)^{p}d\mu(x,t)+\int_{\tilde{Q}\setminus A}\mathcal{M}(\sigma\chi_{Q})(x,t)^{p}d\mu(x,t)\\
 & \leq2^{p}\left(\frac{\sigma(Q)}{|Q|}\right)^{p}\mu(\tilde{Q})+\sum_{k \geq 0}\int_{\Omega_{k}\setminus\Omega_{k+1}}\mathcal{M}(\sigma\chi_{Q})(x,t)^{p}d\mu(x,t)\\
 & =I+II
\end{split}
\]
We first obtain an estimate for $I$. Using generalized H\"older inequality
\[
\begin{split}I & =2^{p}\left(\frac{1}{|Q|}\int_{Q}\sigma^{\frac{1}{p}}(x)\sigma^{\frac{1}{p'}}(x)dx\right)^{p}\mu(\tilde{Q})\leq2^{p}2^{p}\left\Vert \sigma^{\frac{1}{p}}\right\Vert _{\bar{\Phi},Q}^{p}\left\Vert \sigma^{\frac{1}{p'}}\right\Vert _{\Phi,Q}^{p}\mu(\tilde{Q})\\
 & =2^{p}2^{p}\left\Vert \sigma^{\frac{1}{p}}\right\Vert _{\bar{\Phi},Q}^{p}\left\Vert \sigma^{\frac{1}{p'}}\right\Vert _{\Phi,Q}^{p}\frac{\mu(\tilde{Q})}{|Q|}|Q|\leq2^{p}2^{p}[\mu,\sigma,\Phi]_{A_{p}}\left\Vert \sigma^{\frac{1}{p}}\right\Vert _{\bar{\Phi},Q}^{p}|Q|\\
 & \leq2^{p}2^{p}[\mu,\sigma,\Phi]_{A_{p}}\int_{Q}M_{\bar{\Phi}}\left(\sigma^{\frac{1}{p}}\chi_{Q}\right)^{p}(x)dx
\end{split}
\]
We now focus on $II$
\[
\begin{split}
II&\leq\sum_{k\geq 0}2^{((k+1)(n+1)+k_0)p}\mu(\Omega_k)\\
 &= 2^{(n+1)p}\sum_{k\geq 0}2^{(k(n+1)+k_0)p}\mu(\Omega_k) 
\end{split}
\]
Using now Lemma \ref{Lemma:CZ} we
have that
\[
\begin{split}II\leq & 2^{(n+1)p}\sum_{k\geq 0}2^{(k(n+1)+k_0)p}\mu(\Omega_k) \\
& \leq2^{(n+1)p}\sum_{k,i}\left(\frac{1}{|Q_{i}^{k}|}\int_{Q_{i}^{k}}\sigma(x)\chi_{Q}(x)dx\right)^{p}\mu\left(\widetilde{Q_{i}^{k}}\right)\\
 & \leq2^{(n+1)p}\sum_{k,i}\left(\frac{1}{|Q_{i}^{k}|}\int_{Q_{i}^{k}}\sigma^{\frac{1}{p}}(x)\sigma^{\frac{1}{p'}}(x)\chi_{Q}(x)dx\right)^{p}\frac{\mu\left(\widetilde{Q_{i}^{k}}\right)}{|Q_{i}^{k}|}|Q_{i}^{k}|
\end{split}
\]
Now using generalized H\"older inequality
\[
\begin{split} & 2^{(n+1)p}\sum_{k,i}\left(\frac{1}{|Q_{i}^{k}|}\int_{Q_{i}^{k}}\sigma^{\frac{1}{p}}(x)\sigma^{\frac{1}{p'}}(x)\chi_{Q}(x)dx\right)^{p}\frac{\mu\left(\widetilde{Q_{i}^{k}}\right)}{|Q_{i}^{k}|}|Q_{i}^{k}|\\
 & \leq2^{(n+2)p}\sum_{k,i}\left\Vert \sigma^{\frac{1}{p}}\chi_{Q}\right\Vert _{\bar{\Phi},Q_{i}^{k}}^{p}\left\Vert \sigma^{\frac{1}{p'}}\right\Vert _{\Phi,Q_{i}^{k}}^{p}\frac{\mu\left(\widetilde{Q_{i}^{k}}\right)}{|Q_{i}^{k}|}|Q_{i}^{k}|\\
 & \leq 2^{(n+2)p}[\mu,\sigma,\Phi]_{A'_{p}}\sum_{k,i}\left\Vert \sigma^{\frac{1}{p}}\chi_{Q}\right\Vert _{\bar{\Phi},Q_{i}^{k}}^{p}|Q_{i}^{k}|
\end{split}
\]
Since Lemma \ref{Lemma:CZ} allows us choose $E_{i}^{k}\subseteq Q_{i}^{k}$
pairwise disjoint and such that $|Q_{i}^{k}|\leq2|E_{i}^{k}|$, we have that
\[
\begin{split} &  2^{(n+2)p}[\mu,\sigma,\Phi]_{A'_{p}}\sum_{k,i}\left\Vert \sigma^{\frac{1}{p}}\chi_{Q}\right\Vert _{\bar{\Phi},Q_{i}^{k}}^{p}|Q_{i}^{k}|\\
 & \leq2^{(n+2)p+1}[\mu,\sigma,\Phi]_{A_{p}}\sum_{k,i}\left\Vert \sigma^{\frac{1}{p}}\chi_{Q}\right\Vert _{\bar{\Phi},Q_{i}^{k}}^{p}|E_{i}^{k}|\\
 & \leq2^{(n+2)p+1}[\mu,\sigma,\Phi]_{A_{p}}\sum_{k,i}\int_{E_{i}^{k}}M_{\bar{\Phi}}(\sigma^{\frac{1}{p}}\chi_{Q})^{p}(x)dx\\
 & \leq2^{(n+2)p+1}[\mu,\sigma,\Phi]_{A_{p}}\int_{Q}M_{\bar{\Phi}}(\sigma^{\frac{1}{p}}\chi_{Q})^{p}(x)dx.
\end{split}
\]
Finally, combining estimates
\[
\int_{\tilde{Q}}\mathcal{N}^\mathcal{D}(\sigma\chi_{Q})(x,t)^{p}d\mu(x,t)\leq c_{n,p}[\mu,\sigma,\Phi]_{A'_{p}}\int_{Q}M_{\bar{\Phi}}(\sigma^{\frac{1}{p}}\chi_{Q})^{p}(x)dx,
\]
dividing by $\sigma(Q)$ and raising to the power $\frac{1}{p}$
\[
[\mu,\sigma]_{S'_{p},\mathcal{D}}\leq c_{n,p}\left([\mu,\sigma,\Phi]_{A'_{p}}[\sigma,\bar{\Phi}]_{W_{p}}\right)^{\frac{1}{p}}.
\]

\end{proof}

To end this section we give the proof of Theorem \ref{Thm:QuantLS}. We will adapt the proof of \cite{LS}.

\begin{proof}
As in the proof of Theorem \ref{Thm:CarlesonMax} it is enough to consider $\mathcal{N}^{\mathcal{D}_{j} }$, 
\[
\| \mathcal{N}^{\mathcal{D}_{j} } (\sigma \cdot)   \|_{L^{p}(\mathbb{R}^{n},\sigma)\rightarrow L^{p}(\mathbb{R}_{+}^{n+1},\mu)} \simeq 
[\mu,\sigma]_{S_{p},\mathcal{D}_{j}}    \qquad j=1,\cdots, 2^n
\]
where
\[
[\mu,\sigma]_{S_{p},\mathcal{D}_{j}} =\sup_{Q \in \mathcal{D}_{j} }\left(\frac{\int_{\tilde{Q}} \mathcal{N}^{\mathcal{D}_{j} }(\sigma \chi_Q) (x,t)^{p}d\mu(x,t)}{\int_{Q}\sigma(x)dx}\right)^{\frac{1}{p}}.
\]
Hence, it is enough to prove that
\[
[\mu,\sigma]_{S_{p},\mathcal{D}_{j}}    \lesssim  \left\lceil \mu,\sigma\right\rceil_{p,\varepsilon}.
\]
We recall that the quantity $\left\lceil \mu,\sigma\right\rceil_{p,\varepsilon}$ is defined by
\[
\left\lceil \mu,\sigma\right\rceil _{p,\varepsilon}=\sup_{Q}\rho_{\sigma,\varepsilon}(Q)\left(\frac{\sigma(Q)}{|Q|}\right)^{p-1}\frac{\mu(\tilde{Q})}{|Q|}
\]
with $\rho_{\sigma,\varepsilon}(Q)=\rho(Q)\varepsilon(\rho(Q))$
and $\rho(Q)=\frac{\int_{Q}M(\sigma\chi_{Q})dx}{\sigma(Q)}$, where $\varepsilon$ is a monotonic increasing function that satisfies 
\begin{equation}\label{CondEpsilon}
\int_{\frac{1}{2}}^{\infty}\frac{1}{\varepsilon(t)}\frac{dt}{t}=1.
\end{equation}

To simplify the notation we fix one of these $j$ and denote   ${\mathcal{D}_{j}}$ and   $\mathcal{N}^{\mathcal{D}_{j}}$ by $\mathcal{D}$ 
and  $\mathcal{N}$ 

Let us fix $Q\in \mathcal{D}$. Arguing as we did in the proof of Theorem  \ref{Thm:QuantPR}, we can write
\[
\begin{split} & \int_{\tilde{Q}}\mathcal{N}(\sigma\chi_{Q})(x,t)^{p}d\mu(x,t)\\
& \leq 2^{p}\left(\frac{\sigma(Q)}{|Q|}\right)^{p}\mu(\tilde{Q})+2^{(n+1)p}\sum_{k,i}\left(\frac{1}{|Q_{i}^{k}|}\int_{Q_{i}^{k}}\sigma(x)\chi_{Q}(x)dx\right)^{p}\mu\left(\widetilde{Q_{i}^{k}}\right)\\
&=I+II.
\end{split}
\]
First we observe that
\[\begin{split}
I &= 2^{p}\left(\frac{\sigma(Q)}{|Q|}\right)^{p-1}\frac{\mu(\tilde{Q})}{|Q|}\sigma(Q)\leq 2^{p}\left(\frac{\sigma(Q)}{|Q|}\right)^{p-1}\frac{\mu(\tilde{Q})}{|Q|}\rho(Q)\sigma(Q)\\
&\leq \frac{2^p}{\varepsilon(1)}2^{p}\left(\frac{\sigma(Q)}{|Q|}\right)^{p-1}\frac{\mu(\tilde{Q})}{|Q|}\rho_{\sigma,\varepsilon}(Q)\sigma(Q)\leq \frac{2^p}{\varepsilon(1)}\left\lceil \mu,\sigma\right\rceil_{p,\varepsilon}
\end{split}\]

To end the proof we have to control $II$. To simplify we denote by $\mathcal{S}$ the family of cubes $\{Q_{i}^{k}\}$ and everything is left is to understand 
$$II\leq 2^{(n+1)p}\sum_{S\in\mathcal{S}}\sigma_{S}^{p}\mu(\tilde{S})
$$
where $\sigma_{S}= \frac{\sigma(S)}{|S|}$. We divide now the collection $\mathcal{S}$ into subcollections $\mathcal{S}_{a,r}$ as follows.
$S\in\mathcal{S}_{a,r}$ for some $a\in\mathbb{Z}$ and $r\in\{0,1,2,\dots\}$ if and only if
$$
2^{a-1}\leq\sigma_{S}^{p-1}\frac{\mu(\tilde{S})}{|S|}\rho_{\sigma,\varepsilon}(S)\leq2^{a}\quad\text{and}\quad2^{r}\leq\rho(S)\leq2^{r+1}.
$$
Note that $\mathcal{S}_{a,r}$ is empty if $\left\lceil \mu, \sigma\right\rceil _{p,\varepsilon}<2^{a-1}$. Now for these collections we have that
\[
\begin{split}\sum_{S\in\mathcal{S}_{a,r}}\sigma_{S}^{p}\mu(\tilde{S}) & =\sum_{S\in\mathcal{S}_{a,r}}\sigma_{S}^{p-1}\sigma(S)\frac{\mu(\tilde{S})}{|S|}=\sum_{S\in\mathcal{S}_{a,r}}\frac{\sigma_{S}^{p-1}\frac{\mu(\tilde{S})}{|S|}\rho_{\sigma,\varepsilon}(S)\sigma(S)}{\rho_{\sigma,\varepsilon}(S)}\\
 & \leq2^{a}\sum_{S\in\mathcal{S}_{a,r}}\frac{\sigma(S)}{\rho_{\sigma,\varepsilon}(S)}.
\end{split}
\]
Now we observe that $\frac{1}{\rho(S)}\leq\frac{1}{2^{r}}$ and since
$\varepsilon$ is increasing $\varepsilon(2^{r})\leq\varepsilon(\rho(S))$.
Then
\[
\frac{1}{\rho_{\sigma,\varepsilon}(S)}\leq\frac{1}{2^{r}\varepsilon(2^{r})}.
\]
Now we recall that, by Lemma \ref{Lemma:CZ}, $\mathcal{S}$ is a sparse family, that is,  for each $S$ there is a measurable set $E(S) \subset S$ such that $|S| \leq 2|E(S)|$ for each $S$ and such that the family 
$\{E(S)\}_S$ is pairwise disjoint. Then,  
\[
\begin{split}\sum_{S\in\mathcal{S}_{a,r}}\sigma_{S}^{p}\mu(\tilde{S}) & \leq2^{a}\sum_{S\in\mathcal{S}_{a,r}}\frac{\sigma(S)}{\rho_{\sigma,\varepsilon}(S)}\leq2^{a}\sum_{S\in\mathcal{S}_{a,r}}\frac{\sigma(S)}{2^{r}\varepsilon(2^{r})}=\frac{2^{a}}{2^{r}\varepsilon(2^{r})}\sum_{S\in\mathcal{S}_{a,r}}\sigma(S)\\
 & =\frac{2^{a}}{2^{r}\varepsilon(2^{r})}\sum_{\stackrel{{\scriptstyle S'\in\mathcal{S}_{a,r}}}{S'\text{ maximal}}}\sum_{\stackrel{{\scriptstyle S\in\mathcal{S}_{a,r}}}{S\subseteq S'}}\sigma(S)=\frac{2^{a}}{2^{r}\varepsilon(2^{r})}\sum_{\stackrel{{\scriptstyle S'\in\mathcal{S}_{a,r}}}{S'\text{ maximal}}}\sum_{\stackrel{{\scriptstyle S\in\mathcal{S}_{a,r}}}{S\subseteq S'}}\frac{\sigma(S)}{|S|}|S|\\
\mathcal{S}\text{ is sparse} & \leq c\frac{2^{a}}{2^{r}\varepsilon(2^{r})}\sum_{\stackrel{{\scriptstyle S'\in\mathcal{S}_{a,r}}}{S'\text{ maximal}}}\sum_{\stackrel{{\scriptstyle S\in\mathcal{S}_{a,r}}}{S\subseteq S'}}\frac{\sigma(S)}{|S|}|E(S)|\\
 &    \leq c\frac{2^{a}}{2^{r}\varepsilon(2^{r})}\sum_{\stackrel{{\scriptstyle S'\in\mathcal{S}_{a,r}}}{S'\text{ maximal}}}\sum_{\stackrel{{\scriptstyle S\in\mathcal{S}_{a,r}}}{S\subseteq S'}}\left(\inf_{z\in S}M(\sigma\chi_{S'})(z)\right)|E(S)| \\
 & \leq c\frac{2^{a}}{2^{r}\varepsilon(2^{r})}\sum_{\stackrel{{\scriptstyle S'\in\mathcal{S}_{a,r}}}{S'\text{ maximal}}}\sum_{\stackrel{{\scriptstyle S\in\mathcal{S}_{a,r}}}{S\subseteq S'}}\int_{E(S)}M(\sigma\chi_{S'})(x)\,dx\\
 &\leq c\frac{2^{a}}{2^{r}\varepsilon(2^{r})}\sum_{\stackrel{{\scriptstyle S\in\mathcal{S}_{a,r}}}{S\text{ maximal}}}\int_{S}M(\sigma\chi_{S})dx\\
 & \leq2c\frac{2^{a}}{\varepsilon(2^{r})}\sum_{\stackrel{{\scriptstyle S\in\mathcal{S}_{a,r}}}{S\text{ maximal}}}\frac{\int_{S}M(\sigma\chi_{S})dx}{2^{r+1}}\leq2c\frac{2^{a}}{\varepsilon(2^{r})}\sum_{\stackrel{{\scriptstyle S\in\mathcal{S}_{a,r}}}{S\text{ maximal}}}\frac{\int_{S}M(\sigma\chi_{S})dx}{\rho(S)}\\
 & =2c\frac{2^{a}}{\varepsilon(2^{r})}\sum_{\stackrel{{\scriptstyle S\in\mathcal{S}_{a,r}}}{S\text{ maximal}}}\sigma(S)\leq2c\frac{2^{a}}{\varepsilon(2^{r})}\sigma(Q).
\end{split}
\]

Taking this estimate into account we have that
\[
\begin{split}\sum_{S\in\mathcal{S}}\sigma_{S}^{p}\mu(\tilde{S}) & =\sum_{\stackrel{{\scriptstyle a\in\mathbb{Z}}}{\left\lceil \mu,\sigma\right\rceil _{p,\varepsilon}\geq2^{a-1}}}\sum_{r\in\{0,1,\dots\}}\sum_{S\in\mathcal{S}_{a,r}}\sigma_{S}^{p}\mu(\tilde{S})\leq\sum_{\stackrel{{\scriptstyle a\in\mathbb{Z}}}{\left\lceil \mu,\sigma\right\rceil _{p,\varepsilon}\geq2^{a-1}}}\sum_{r\in\{0,1,\dots\}}2c\frac{2^{a}}{\varepsilon(2^{r})}\sigma(Q)\\
 & =2c\left(\sum_{\stackrel{{\scriptstyle a\in\mathbb{Z}}}{\left\lceil \mu,\sigma\right\rceil _{p,\varepsilon}\geq2^{a-1}}}2^{a}\right)\left(\sum_{r\in\{0,1,\dots\}}\frac{1}{\varepsilon(2^{r})}\right)\sigma(Q)\dot{=}2cAB\sigma(Q)
\end{split}
\]
and we are left with controlling both sums.

It's a straightforward computation that
\[A\leq 2\left\lceil \mu,\sigma\right\rceil _{p,\varepsilon}.\]

For $B$, using \eqref{CondEpsilon}, it's clear that
\[\sum_{r\in\{0,1,\dots\}}\frac{1}{\varepsilon(2^{r})}  =\frac{1}{\log(2)}\sum_{r\in\{0,1,\dots\}}\frac{\log(2^{r})-\log(2^{r-1})}{\varepsilon(2^{r})}\approx\int_{\frac{1}{2}}^{\infty}\frac{1}{\varepsilon(t)}\frac{dt}{t}=1. 
\]

Combining these estimates we conclude that
\[\sum_{S\in\mathcal{S}}\sigma_{S}^{p}\mu(\tilde{S}) \leq c\, \left\lceil \mu,\sigma\right\rceil _{p,\varepsilon}\sigma(Q).\]This ends the proof.
\end{proof}

\end{document}